\newcommand{\argmin}{\mathop{\rm arg~min}\limits}
\DeclareMathOperator{\tr}{tr}
\newcommand{\R}{\mathbb{R}}
\newcommand{\N}{\mathbb{N}}
\newcommand{\mX}{\bm{X}}
\newcommand{\mY}{\bm{Y}}
\newcommand{\mTheta}{\bm{\Theta}}
\newcommand{\mSigma}{\bm{\Sigma}}
\newcommand{\mI}{\bm{I}}
\newcommand{\mP}{\bm{P}}
\newcommand{\mO}{\bm{O}}
\newcommand{\mR}{\bm{R}}
\newcommand{\mV}{\bm{V}}
\newcommand{\mW}{\bm{W}}
\newcommand{\mM}{\bm{M}}
\newcommand{\mB}{\bm{B}}
\newcommand{\mZ}{\bm{Z}}
\newcommand{\mU}{\bm{U}}
\newcommand{\mXi}{\bm{\Xi}}
\newcommand{\mE}{\bm{\mathcal{E}}}
\newcommand{\mGamma}{\bm{\Gamma}}
\newcommand{\mDelta}{\bm{\Delta}}
\newcommand{\cJ}{\mathcal{J}}
\newcommand{\limlh}{\lim_{\begin{subarray}{c}n\to\infty \\ c_{n,p} \to c_0\end{subarray}}}
\newcommand{\liminflh}{\liminf_{\begin{subarray}{c}n\to\infty \\ c_{n,p} \to c_0\end{subarray}}}
\newcommand{\rank}{\mathop{\rm rank}}
\newcommand{\etr}{\mathop{\rm etr}}
\newcommand{\vecop}{\mathop{\rm vec}}
\newcommand{\pr}{\mathbb{P}}
\newtheorem{definition}{Definition}[section]
\newtheorem{theorem}{Theorem}[section]
\newtheorem{lemma}{Lemma}[section]
\newtheorem{assumption}{Assumption}[section]
\title{Consistent Bayesian Information Criterion Based on a Mixture Prior for Possibly High-Dimensional Multivariate Linear Regression Models}
\author{Haruki Kono \thanks{Email: hkono@mit.edu}}
\affil{Department of Economics, MIT}
\author{Tatsuya Kubokawa \thanks{Email: tatsuya@e.u-tokyo.ac.jp}}
\affil{Department of Economics, University of Tokyo}
\date{\today}
\begin{document}

\maketitle

\begin{abstract}
In the problem of selecting variables in a multivariate linear regression model, we derive new Bayesian information criteria based on a prior mixing a smooth distribution and a delta distribution.
Each of them can be interpreted as a fusion of the Akaike information criterion (AIC) and the Bayesian information criterion (BIC).
Inheriting their asymptotic properties, our information criteria are consistent in variable selection in both the large-sample and the high-dimensional asymptotic frameworks.
In numerical simulations, variable selection methods based on our information criteria choose the true set of variables with high probability in most cases.

\textbf{Keywords} --- consistency, high dimensional data, information criterion, mixture distribution, multivariate linear regression, variable selection

\end{abstract}

\par\null
\section{Introduction}

Model selection, or variable selection in particular, is one of the most important problems in statistics, and various selection methods have been suggested and studied both theoretically and practically.
See \cite{konishi2008information} for general overview of this field.
In particular, the Akaike information criterion (AIC) and the Bayesian information criterion (BIC), suggested by \cite{akaike1973information} and \cite{schwarz1978estimating}, respectively, have been used in various applications to select appropriate explanatory variables in univariate linear regression models.
It is well known that the BIC is consistent in the sense of choosing the true variables in the conventional large-sample (LS) asymptotic framework (\cite{nishii1984asymptotic}), whereas the AIC is not.
In contrast, \cite{yanagihara2015consistency} established the consistency of the AIC and the inconsistency of the BIC in high-dimensional (HD) cases in variable selection of multivariate linear regression models.
These results have cast doubt on careless use of information criteria especially in the current world where high-dimensional data are ubiquitous due to advances in technology.
Hence, researchers have paid attention to variable selection methods that are consistent both in the LS and HD frameworks.
One simple way to construct such a variable selection method is to adjust the penalty term of the AIC.
Indeed, \cite{yanagihara2019evaluation} proposed a consistent variable selection criterion based on this idea.
However, this approach does not fit the concept of the AIC; namely, the adjusted penalty term is not an exact or approximately unbiased estimator of the bias which arises when the true density is approximated by a predictive density.
This motivates us to build a new information criterion that not only meets the fundamental concept of the AIC but also has the consistency properties in both LS and HD cases.

To specify the problem we address, suppose that one has $n$ independent observations of $p$ response variables and $k$ explanatory variables.
Let $\mY$ be an $n \times p$ observation matrix of the response variables, and let $\mX$ be an $n \times k$ observation matrix of the explanatory variables with $\rank \mX = k \ (< n)$.
Let $\omega= \{1, 2, \cdots, k\}$.
For a subset $j$ of $\omega,$ $\mX_j$ denotes the $n \times k_j$ matrix consisting of the columns of $\mX$ indexed by the elements of $j,$ and $k_j$ denotes the number of the elements of $j$.
It is noted that $\mX = \mX_\omega$ and $k = k_\omega$.
For $j \subset \omega,$ we consider the following multivariate linear regression model:
\begin{align} \label{eq:mvr}
  \mY \sim N_{n \times p} (\mX_j \mTheta_j, \mSigma_j \otimes \mI_n),
\end{align}
where $\mI_n$ is the $n \times n$ identity matrix, $\mTheta_j$ is a $k_j \times p$ unknown matrix of regression coefficients, and $\mSigma_j$ is a $p \times p$ unknown covariance matrix.
We assume that the observed data are generated from the following true model:
\begin{align*}
  \mY \sim N_{n \times p} (\mX_{j_\ast} \mTheta_{\ast}, \mSigma_{\ast} \otimes \mI_n)
\end{align*}
for some $j_\ast \subset \omega$.
For the simplicity of notation, $\mX_{j_\ast}$ and $k_{j_\ast}$ are abbreviated as $\mX_\ast$ and $k_\ast,$ respectively.
The variable selection problem is the estimation of $j_\ast$ out of the set $\cJ \subset 2^\omega$ of candidate models.
In particular, we focus on model selection methods based on information criteria; that is, we estimate the true model $j_\ast$ with
\begin{align*}
  \hat j_\ast
  =
  \argmin_{j \in \cJ} \text{IC}(j),
\end{align*}
for an information criterion IC.

Before we start, we review the basic concept of information criteria in the context of the multivariate linear regression model (\ref{eq:mvr}).
We write the probability density function of a matrix normal distribution $N_{n \times p} (\mM, \mSigma \otimes \mI_n)$ as $f(\cdot \mid \mM, \mSigma)$ for an $n \times p$ matrix $\mM$ and a $p \times p$ matrix $\mSigma.$
We estimate the true density $f_\ast = f(\cdot \mid \mX_\ast \mTheta_\ast, \mSigma_\ast)$ with a predictive density $\hat f_j,$ which depends on $\mY.$
We evaluate the performance of $\hat f_j$ with the risk with respect to the Kullback-Leibler (KL) divergence:
\begin{align*}
  E^{\mY} \left[\int
    \left\{
      \log\frac{f_\ast(\tilde\mY)}{\hat f_j(\tilde\mY)}
    \right\}
    f_\ast(\tilde\mY) d\tilde\mY
  \right]
  =
  \int \{\log f_\ast (\tilde\mY)\} f_\ast(\tilde\mY) d\tilde\mY + E^{\mY, \tilde\mY}[-\log \hat f_j(\tilde\mY)]
  ,
\end{align*}
where the expectation is in the true distribution.
Since the first term is independent of the predictive density of the candidate model, we can focus on the second term to see the performance of $\hat f_j$.
Thus, for the sake of model selection problems, it is reasonable to find a subset $\hat j_\ast \in \cJ$ of $\omega$ that minimizes $E^{\mY, \tilde\mY}[-2\log \hat f_j(\tilde\mY)]$.
However, this model selection method is not feasible because the objective function depends on unknown parameters: $\mTheta_\ast, \mSigma_\ast,$ and $j_\ast.$
Hence, we estimate $E^{\mY, \tilde\mY}[-2\log \hat f_j(\tilde\mY)]$ with an information criterion of the following form:
\begin{align*}
 {\rm IC}(j)
  =
  - 2 \log \hat f_j (\mY) + \hat b_j,
\end{align*}
where $\hat b_j$ is an exact or asymptotically unbiased estimator of $E^{\mY, \tilde\mY} [-2 \log \hat f_j(\tilde\mY) + 2 \log \hat f_j(\mY)].$
To estimate the true model, we select the model that minimizes ${\rm IC}(j).$

In this framework, various types of information criteria are derived from different predictive densities.
For example, from the maximum likelihood estimator (MLE) plug-in predictive density $\hat f_j = f(\cdot \mid \mX_j \hat\mTheta_j, \hat\mSigma_j),$ the exact AIC, or the corrected AIC, which was proposed in \cite{sugiura1978further} and \cite{bedrick1994model}, is induced:
\begin{align*}
  {\rm AIC}_{\rm C}(j)
  =
  -2 \log f(\mY \mid \mX_j \hat\mTheta_j, \hat\mSigma_j) + \frac{np(2k_j + p + 1)}{n - p - k_j - 1}
  .
\end{align*}
Here the MLE's are represented as
\begin{align*}
  \hat \mTheta_j
  =
  (\mX_j^\top \mX_j)^{-1}\mX_j^\top \mY
  ,
  \ \
  \hat \mSigma_j
  =
  \frac{1}{n} \mY^\top  (\mI_n - \mP_j) \mY,
\end{align*}
where $\mP_j$ is the projection matrix:
$\mP_j = \mX_j(\mX_j^\top \mX_j)^{-1}\mX_j^\top.$
When $n$ is sufficiently large, the exact AIC is approximated by the AIC:
\begin{align*}
  {\rm AIC}(j)
  =
  -2 \log f(\mY \mid \mX_j \hat\mTheta_j, \hat\mSigma_j) + 2 \left(k_j p + \frac{p (p+1)}{2}\right)
  .
\end{align*}
Another example is the BIC, which is represented as follows:
\begin{align*}
  {\rm BIC}(j)
  =
  -2 \log f(\mY \mid \mX_j\hat\mTheta_j, \hat\mSigma_j) + (\log n) \left(k_j p + \frac{p (p+1)}{2}\right)
  .
\end{align*}
The BIC is derived when one assumes a smooth prior $\pi$ for the parameters.
In fact, when the marginal likelihood $\hat f_j = \int\int f(\cdot \mid \mX_j \mTheta_j, \mSigma_j) \pi(\mTheta_j, \mSigma_j) d\mTheta_j\mSigma_j$ is used as a predictive density, we obtain the BIC by applying the Laplace approximation to it.

Among possible information criteria for variable selection, those that select the true model with high probability are desired.
In this sense, a good information criterion is required to exhibit consistency, which is defined as:
\begin{definition}
  A model selection procedure $\hat j_\ast$ is said to be {\it consistent} if $\pr(\hat j_\ast = j_\ast)$ asymptotically goes to one.
\end{definition}

It is noted that the notion of consistency depends on the asymptotic framework one considers.
In many studies, properties of various information criteria have been investigated in the LS asymptotic framework where the data size $n$ grows to infinity while the other parameters are fixed.
In this framework, it is well known that the BIC is consistent whereas neither the AIC nor the exact AIC is, as stated at the beginning of this section.
Although the AIC and the exact AIC are often criticized due to their inconsistency, the model selection methods based on them are designed to minimize the KL risk rather than to choose the true model with high probability.
Indeed, \cite{shibata1981optimal} and \cite{shao1997asymptotic} showed that the variable selection based on the AIC asymptotically minimizes the prediction error.

In practical applications, especially in economics, one often witnesses situations where response variables are high-dimensional compared to the data size.
One of the common examples is the portfolio choice problem in financial economics.
Specifically, Eugene Fama and Kenneth French have investigated linear models to regress returns of multiple portfolios on other economic variables in their studies (e.g., \cite{fama1993common} and \cite{fama2015five}).
Since economists usually pay attention to a lot of portfolios simultaneously, the number of response variables can be as large as that of observations.
Although they could apply classical statistical methods developed in the LS framework to this kind of data, it is known that when the dimension of response variables is not ignorable compared with the data size, approximation errors from such methods are often huge.
This is why a lot of studies have examined properties in the HD asymptotic framework.
Mathematically, this framework handles the case when the dimension $p$ of response variables, as well as the data size $n,$ goes to infinity under the condition that ${p}/{n} \to c_0 \in [0, 1).$
In this framework, \cite{yanagihara2015consistency} established, as we have seen above, a surprising result: in the HD asymptotic framework, the AIC and the exact AIC are consistent while the BIC is not.

To deal with both of the LS and HD asymptotic frameworks at the same time, we consider the following asymptotic framework:
\begin{align} \label{ass:asy}
  n \to \infty
  ,
  \ \
  c_{n, p} = \frac{p}{n} \to c_0 \in[0, 1)
  .
\end{align}
\noindent
This framework has been investigated in many studies, such as \cite{yanagihara2017high} and \cite{yanagihara2019evaluation}.
We write the limit under (\ref{ass:asy}) as
\begin{align} \label{ass:unif_asy}
  \limlh
  ,
\end{align}
and we use the notation $p(n)$ for $p$ when we need to emphasize that it may vary with $n.$
Following the existing literature, we assume that $p(n)$ is nondecreasing, and therefore, $p(n)$ is either bounded or divergent.
The framework (\ref{ass:asy}) contains both LS and HD as its special cases, and moreover, it allows us to handle data with bounded but inconstant $p.$
In this paper, $o(x), O(x), o_p(x),$ and $O_p(x)$ denote the Landau notations in (\ref{ass:unif_asy}).

In practical applications, both AIC and BIC are used for model selection, but they often choose different models.
If analysts are not sure if the data is LS or HD, they cannot choose the true model consistently.
Therefore, it is a natural question whether there exists an information criterion that is consistent in both frameworks, or (\ref{ass:unif_asy}).
\cite{yanagihara2019evaluation} suggested an information criterion based on the GIC proposed by \cite{nishii1984asymptotic}:
\begin{align} \label{eq:gic}
  {\rm GIC}(j)
  =
  -2 \log f(\mY \mid \mX_j \hat\mTheta_j, \hat\mSigma_j)
  +
  \alpha \left(k_j p + \frac{p (p+1)}{2}\right)
\end{align}
where $\alpha = \beta - n\log \left(1 - p/n\right)/p,$ $p^{1/2} \beta \to \infty,$ and $\beta p/n \to 0,$ and showed that it is consistent in (\ref{ass:unif_asy}) under some conditions.

Although the GIC maintains the consistency property, its construction completely ignores the basic concept of an information criterion because it is derived just by artificially adjusting the penalty term of the AIC.
Due to the lack of interpretability of the choice of $\alpha,$ one cannot apply this information criterion in a straightforward way to models other than the multivariate linear regression model.
In this paper, we derive new consistent information criteria within the conventional framework discussed above without arbitrarily adjusting the penalty term.
Our idea is to minimize the KL risk of a predictive density based on the Bayesian marginal likelihood from a frequentist point of view.
To do this, we consider a Bayesian model, assuming a mixture prior between a smooth distribution and a degenerate one for the regression coefficients.
We can calculate the Bayesian marginal likelihood of this model, but it depends on some unknown parameters.
By estimating them with MLE's, we can obtain an estimator of the Bayesian marginal likelihood, and we use it as a predictive density.
This is known as the empirical Bayesian method, and \cite{kawakubo2018variant} suggested some information criteria based on this procedure, although they did not consider mixture prior distributions.
By setting the mixture weight properly, we can see that the information criterion based on this predictive density performs like the BIC and the AIC (or the exact AIC, more precisely) in the LS and HD asymptotic frameworks, respectively.
Hence, our new criterion inherits asymptotic properties of each criterion, and the variable selection based on it is expected to be consistent in both frameworks, which is shown to be the case in the latter part of this paper.

\textcolor{black}{
This paper is organized as follows.
In Section \ref{sec:new_information_criterion}, we propose a new family of information criteria based on a Bayesian model with a mixture prior of a smooth distribution and a degenerate one, as stated above.
We also provide some examples of this by changing the smooth part of the prior.
In Section \ref{sec:consistency}, we extend the results of \cite{yanagihara2015consistency}, which provided sufficient conditions for the consistency properties of variable selection methods.
Besides, we apply them to show that our new information criteria are consistent.
In Section \ref{sec:discussion}, we discuss issues that appear in practical applications.
Specifically, as the performances of our information criteria may depend on the mixture weight of the prior, we suggest some ways to determine it.
In Section \ref{sec:numerical_study}, we provide the results of numerical experiments and show that our new methods overwhelm existing ones.
Section \ref{sec:conclusion} concludes.
All the technical details are provided in the appendix.
}

\section{New Information Criterion} \label{sec:new_information_criterion}
\subsection{Derivation}
Our goal is to construct a new consistent information criterion based on the KL risk.
To do this, we shall invent an information criterion that performs like the BIC when the data size is large enough and like the AIC when the dimension of the response variable is not ignorable.
Recall that the AIC is derived when the true density is estimated by the MLE plug-in density, and that the BIC originated from the Bayesian marginal likelihood when one assumes a smooth prior on parameters.
Hence, we can expect that a consistent information criterion would be derived by estimating the true density with the marginal likelihood of a Bayesian model with a mixture prior of a smooth distribution and a degenerate one.
Let us consider the following Bayesian model based on a mixture prior for $\mTheta_j:$
\begin{align} \label{eq:ss_model}
  \begin{split}
    \hspace{0.55cm} \mY \mid \mTheta_j, \mSigma_{0, j} &\sim N_{n \times p}(\mX_j \mTheta_j, \mSigma_{0, j} \otimes \mI_n)
    \\
    \mTheta_j \mid \mTheta_{0, j}, \mSigma_{0, j}, w_j &\sim (1 - w_j)\pi(\mTheta_j \mid \mSigma_{0, j}) + w_j \delta_{\mTheta_{0, j}}(\mTheta_j)
  \end{split}
  ,
\end{align}
where $w_j \in (0, 1),$  $\pi$ is a smooth density on $\R^{k_j \times p},$  and $\delta_{\mTheta_{0, j}}$ is a degenerate density on $\R^{k_j \times p}$ that concentrates at $\mTheta_{0, j}$.
Here, $\mTheta_{0, j}$ and $\mSigma_{0, j}$ are unknown.

$w_j$ is a parameter that varies depending on properties of the data.
In particular, $w_j$ controls the high-dimensionality of the data.
We set $w_j$ so that when the data size $n$ is large enough, $w_j$ is close to zero, and it grows as the dimension $p$ of the response variable increases.
Therefore, the smooth part dominates (\ref{eq:ss_model}) in LS cases whereas the spike part does in HD cases.
\textcolor{black}{
As is discussed later, a typical choice of $w_j$ is
\begin{align} \label{eq:w}
  w_j
  =
  \frac{p^{\varepsilon k_j}}{n^{\varepsilon k_j} + p^{\varepsilon k_j}}
\end{align}
for some constant $\varepsilon > 0.$
This converges to zero if $c_0 = 0;$ otherwise, the limit remains positive.
}

The idea of mixing a smooth distribution and a degenerate one is not very new and has been investigated in many studies.
For example, \cite{berger1986robust} considered a contamination in parameters, and proposed a prior distribution similar to ours.
Because they introduced it to discuss the robustness of Bayes estimators, the weight assigned to the degenerate distribution is expected to be small, but here $w_j$ can vary freely in $(0, 1).$
Another example of mixture distribution is a spike and slab prior suggested by \cite{ishwaran2005spike} in the context of Bayesian model selection.
In their procedure, a spike and slab prior is set to each coefficient of explanatory variables, and then one estimates all the weights to execute a model selection by regarding parameters with a nonzero weight as effective.
However, we use the mixture prior in a different way from the ordinary Bayesian context.
Here we reckon $\mTheta_j$ as one parameter and assume the mixture prior with respect to it.

On the regularity of $\pi,$  we assume the following technical conditions:
\begin{assumption} \label{ass:pi}
  Let $\pi_M = \sup_{\mTheta \in \R^{k_j \times p}} \pi(\mTheta \mid \mI_p)$ and $\mSigma$ be a $p \times p$ positive definite matrix.
  \\
  (1) $\pi(\mTheta \mid \mSigma) |\mSigma|^{k_j/2} = \pi(\mTheta \mSigma^{-1/2} \mid \mI_p)$ holds for any $k_j \times p$ matrix $\mTheta;$
  \\
  (2) there exists a sequence $\{\ell_{n, p}\}$ such that
  \begin{align*}
      \limlh \ell_{n, p} = \infty
      ,
      \ \
      \sup \frac{(1 - w_j) \ell_{n, p}^{pk_j/2}}{w_j n^{pk_j/2}} \pi_M
      <
      \infty
      ,
  \end{align*}
  where the supremum is taken over $\{(n, p) \in \N^2 \mid p = p(n)\}.$
\end{assumption}
\noindent
The first condition states that $\pi$ is a multivariate scale family.
Of course, a normal distribution $\pi(\mTheta_j \mid \mSigma) = f(\mTheta_j \mid \mO_{k_j \times p}, \mSigma)$ satisfies it, and moreover, a (scaled) uniform distribution $\pi(\mTheta \mid \mSigma) = |\mSigma|^{-k_j/2}$ also meets it, although this is an improper prior.
The second condition allows us to approximately reduce the information criterion that is suggested later to a simpler form and ensures its consistency.
This condition excludes an exceptionally small $w_j,$ which deteriorates the quality of the approximation and makes the penalty term of the information criterion too harsh.
Although this condition is not necessary as we discuss in Section \ref{sec:discussion}, it is not too restrictive and easy to confirm.
For example, take (\ref{eq:w}) for $\varepsilon \in (0, 1/2)$ as $w_j.$
Then it is obvious that $\ell_{n, p} = \log n$ satisfies the condition if $\sup_{p \in \N} \pi_M < \infty.$
In sum, a broad family of prior distributions including the two distributions above meets Assumption \ref{ass:pi}.

Now, we derive a predictive density based on the model (\ref{eq:ss_model}).
The marginal likelihood is
\begin{align*}
  f_\pi(\mY \mid w_j, \mTheta_{0, j}, \mSigma_{0, j})
  =
  (1 - w_j) f_\pi(\mY \mid \mSigma_{0, j})
  +
  w_j f(\mY \mid \mX_j\mTheta_{0, j}, \mSigma_{0, j})
  ,
\end{align*}
where $f_\pi(\mY \mid \mSigma_{0, j}) = \int f(\mY \mid \mX_j\mTheta_j, \mSigma_{0, j}) \pi(\mTheta_j \mid \mSigma_{0, j}) d\mTheta_j.$
To derive a predictive density based on this, we rely on the empirical Bayesian method.
We substitute the MLE's $\hat \mTheta_j$ and $\hat \mSigma_j$ into $\mTheta_{0, j}$ and $\mSigma_{0, j},$ respectively, and set $f_\pi(\mY \mid w_j, \hat \mTheta_j, \hat \mSigma_j)$ as the predictive density.
Then the information criterion induced from this predictive density has the following form:
\begin{align*}
  - 2 \log f_\pi(\mY \mid w_j, \hat \mTheta_j, \hat \mSigma_j) + \hat b_j
\end{align*}
where $\hat b_j$ is an asymptotically unbiased estimator of
\begin{align*}
  b_j
  =
  E^{\mY, \tilde\mY}[- 2 \log f_\pi(\tilde\mY \mid w_j,\hat\mTheta_j, \hat\mSigma_j) + 2 \log f_\pi(\mY \mid w_j, \hat\mTheta_j, \hat\mSigma_j)]
  .
\end{align*}
The following theorem is useful to evaluate $b_j.$
The proof is given in the appendix.

\begin{theorem} \label{thm:bias}
  Assume Assumption \ref{ass:pi}, and suppose that
  \begin{align} \label{eq:x}
    \liminflh \left|\frac{\mX_j^\top \mX_j}{n}\right| > 0
    ,
  \end{align}
  Then we have
  \begin{align*}
    b_j
    =
    \frac{np(2k_j + p + 1)}{n - p - k_j - 1}
    +
    o(p).
  \end{align*}
\end{theorem}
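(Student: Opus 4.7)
The plan is to reduce the bias computation to the classical exact-AIC bias of the MLE plug-in predictive density. Specifically, I would show that the mixture marginal $f_\pi(\cdot\mid w_j,\hat\mTheta_j,\hat\mSigma_j)$ is, up to terms absorbed in the $o(p)$ remainder, equal to the spike component $w_j f(\cdot\mid\mX_j\hat\mTheta_j,\hat\mSigma_j)$; once this spike dominance is established, the $-2\log w_j$ contributions cancel between $E[-2\log f_\pi(\mY\mid\cdot)]$ and $E[-2\log f_\pi(\tilde\mY\mid\cdot)]$, and the remaining expression is the well-known corrected-AIC bias $np(2k_j+p+1)/(n-p-k_j-1)$ for the matrix-normal model (Sugiura/Bedrick).

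The workhorse is a scale-invariance bound on the smooth marginal $g_\pi(\mM;\mSigma):=\int f(\mM\mid\mX_j\mTheta,\mSigma)\pi(\mTheta\mid\mSigma)d\mTheta$ for arbitrary $n\times p$ matrix $\mM$ and positive-definite $\mSigma$. Completing the square in $\mTheta$, changing variables via $\mV=\mTheta\mSigma^{-1/2}$ and then $\mU=(\mX_j^\top\mX_j)^{1/2}(\mV-\hat\mTheta_j(\mM)\mSigma^{-1/2})$, and invoking Assumption~\ref{ass:pi}(1) together with $\pi(\cdot\mid\mI_p)\leq\pi_M$ yields
\begin{align*}
g_\pi(\mM;\mSigma)\leq f(\mM\mid\mX_j\hat\mTheta_j(\mM),\mSigma)\cdot(2\pi)^{pk_j/2}|\mX_j^\top\mX_j|^{-p/2}\pi_M,
\end{align*}
where $\hat\mTheta_j(\mM):=(\mX_j^\top\mX_j)^{-1}\mX_j^\top\mM$. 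Setting $\mM=\mY$ and $\mSigma=\hat\mSigma_j$, and using $|\mX_j^\top\mX_j|=n^{k_j}|\mX_j^\top\mX_j/n|$ together with (\ref{eq:x}) and Assumption~\ref{ass:pi}(2), bounds the ratio of the smooth to the spike part deterministically by
\begin{align*}
c_{n,p}:=\frac{(1-w_j)(2\pi)^{pk_j/2}|\mX_j^\top\mX_j|^{-p/2}\pi_M}{w_j}\;\leq\;C\bigl[(2\pi)^{k_j}/(\lambda\,\ell_{n,p}^{k_j})\bigr]^{p/2}\;\longrightarrow\;0,
\end{align*}
where $\lambda>0$ is a lower bound on $|\mX_j^\top\mX_j/n|$ provided by (\ref{eq:x}). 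Consequently $-2\log f_\pi(\mY\mid w_j,\hat\mTheta_j,\hat\mSigma_j)=-2\log w_j-2\log f(\mY\mid\mX_j\hat\mTheta_j,\hat\mSigma_j)+o(1)$ holds surely.

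The delicate step is the analogous approximation at $\tilde\mY$: because $\hat\mTheta_j$ is \emph{not} the MLE of $\tilde\mY$, the same bound introduces an extra factor $R_{\tilde\mY}:=f(\tilde\mY\mid\mX_j\tilde\mTheta_j,\hat\mSigma_j)/f(\tilde\mY\mid\mX_j\hat\mTheta_j,\hat\mSigma_j)$ with $\tilde\mTheta_j:=(\mX_j^\top\mX_j)^{-1}\mX_j^\top\tilde\mY$; a direct calculation using the orthogonal decomposition along $\mP_j$ gives $\log R_{\tilde\mY}=\tfrac{1}{2}\tr\{\hat\mSigma_j^{-1}(\tilde\mY-\mY)^\top\mP_j(\tilde\mY-\mY)\}$, a sub-exponential quadratic form of conditional mean $O(pk_j)$ given $\mY$. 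Although $E[R_{\tilde\mY}]$ may be infinite because the requisite MGF parameter is beyond the critical threshold, I would use the pointwise bound $\log(1+c_{n,p}R_{\tilde\mY})\leq\log 2+(\log R_{\tilde\mY})\mathbf{1}\{R_{\tilde\mY}\geq c_{n,p}^{-1}\}$; since $\log c_{n,p}^{-1}\gtrsim pk_j\log\ell_{n,p}$ eventually dwarfs the typical $O(pk_j)$ scale of $\log R_{\tilde\mY}$, the indicator has super-polynomially small probability by standard Wishart/chi-square concentration (conditioning on $\mY$ and using a uniform lower bound on $\lambda_{\min}(\hat\mSigma_j)$ available from matrix-normal arguments), and a truncation argument yields $E[2\log(1+c_{n,p}R_{\tilde\mY})]=o(p)$. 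This is the main technical obstacle. With the spike approximation established on both sides, $b_j=E[-2\log f(\tilde\mY\mid\mX_j\hat\mTheta_j,\hat\mSigma_j)]-E[-2\log f(\mY\mid\mX_j\hat\mTheta_j,\hat\mSigma_j)]+o(p)$; the remaining quantity is the exact KL-bias of the MLE plug-in predictive density in the matrix-normal linear model, which the classical Bedrick--Tsai/Sugiura calculation (inverse-Wishart moment $E[\tr(\hat\mSigma_j^{-1}\mSigma_\ast)]=np/(n-k_j-p-1)$ combined with the analogous $\mP_j$-trace of order $k_j\cdot p$) evaluates to $np(2k_j+p+1)/(n-p-k_j-1)$, yielding the claim.
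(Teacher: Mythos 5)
Your proposal follows essentially the same route as the paper's proof: the same split of $b_j$ into the Sugiura--Bedrick exact-AIC bias plus the two $\log(1+\mathrm{ratio})$ corrections, the same scale-invariance bound $f_\pi(\tilde\mY\mid\hat\mSigma_j)\le \pi_M(2\pi)^{pk_j/2}|\mX_j^\top\mX_j|^{-p/2}f(\tilde\mY\mid\mX_j\tilde\mTheta_j,\hat\mSigma_j)$, and the same identification of $\log R_{\tilde\mY}=\tfrac12\tr\{\hat\mSigma_j^{-1}(\tilde\mTheta_j-\hat\mTheta_j)^\top\mX_j^\top\mX_j(\tilde\mTheta_j-\hat\mTheta_j)\}$ as the only delicate term. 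The single point of divergence is the last step: where you truncate on $\{R_{\tilde\mY}\ge c_{n,p}^{-1}\}$ and invoke concentration, the paper proves a clean general lemma ($E[\log(1+a_{n,p}e^{pY})]=o(p)$ whenever $a_{n,p}e^{up}\to0$ for all $u\ge0$ and $Y$ is $L^2$-bounded, via the layer-cake representation, Fubini, and Markov) and then verifies $E[Y_{n,p}^2]=O(1)$ exactly from inverse-Wishart moments --- a second-moment bound that would also close your truncation step by Cauchy--Schwarz, so no concentration inequality is actually needed.
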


Let us remark that the assumption (\ref{eq:x}) is not very restrictive.
Indeed, it is weaker than the condition that there exists a positive definite matrix $\mR_j$ such that
\begin{align} \label{eq:x_mat}
  \limlh
  \frac{\mX_j^\top \mX_j}{n}
  =
  \mR_j
  ,
\end{align}
which is a common assumption in studies on the asymptotic theory of the linear regression model (e.g., \cite{fujikoshi1997modified}, \cite{fujikoshi2005bias}, and \cite{yanagihara2015consistency}).
Another common assumption is
\begin{align} \label{eq:x_ev}
  \liminflh \lambda_{\text{min}}\left(\frac{\mX_j^\top \mX_j}{n}\right)
  >
  0
  ,
\end{align}
where $\lambda_{\text{min}}(A)$ denotes the smallest eigenvalue of $A.$
This condition is obviously weaker than (\ref{eq:x_mat}).
However, it is sufficient but not necessary for (\ref{eq:x}).
This is because for a square matrix $\mB,$ $\liminflh |\mB| > 0$ does not necessarily imply $ \liminflh \lambda_{\text{min}}(\mB) > 0.$
A simple counterexample of this is
$
  \mB
  =
  \begin{pmatrix}
    n & 0 \\
    0 & 1/n
  \end{pmatrix}
  .
$

Substituting $np(2k_j + p + 1)(n - p - k_j - 1)^{-1}$ into the bias term $\hat b_j,$ we obtain the following information criterion:
\begin{align*}
  {\rm MPIC}_\pi(j)
  &=
  -2 \log f_\pi(\mY \mid w_j, \hat\mTheta_j, \hat\mSigma_j) + \frac{np(2k_j + p + 1)}{n - p - k_j - 1}
  \\
  &=
  n \log |\hat \mSigma_j|
  +
  np(\log 2\pi + 1)
  +
  m_\pi(j)
  ,
\end{align*}
where
\begin{align*}
  m_\pi(j)
  =
  \frac{np(2k_j + p + 1)}{n - p - k_j - 1}
  -
  2 \log \left\{
    \frac{(1 - w_j)f_\pi(\mY \mid \hat \mSigma_j)}{f(\mY \mid \mX_j \hat\mTheta_j, \hat \mSigma_j)}
    +
    w_j
  \right\}
  .
\end{align*}
The MPIC stands for the Mixture Prior Information Criterion.
Although the difference between $b_j$ and $np(2k_j + p + 1)(n - p - k_j - 1)^{-1}$ may not vanish when $p \to \infty,$ the terms of the order $o(p)$ do not matter in variable selection because the terms related to $k_j$ are at least $O(p).$

One can interpret the $\text{MPIC}_\pi$ as a mixture of existing information criteria.
When $w_j$ is close to one,  which is an HD case, $\text{MPIC}_\pi$ is almost the exact AIC.
When $w_j$ is close to zero, which is an LS case, on the other hand, it is reduced to a variant of the BIC, or more precisely, the ABIC suggested by \cite{akaike1980seasonal}, by applying the Laplace approximation.
Given these facts, one can intuitively expect that $\text{MPIC}_\pi$ performs well in both the LS and HD asymptotic frameworks, or under (\ref{ass:unif_asy}), if $w_j$ is properly set, which is indeed true as is shown in Section \ref{sec:consistency}.

\subsection{Examples}
The $\text{MPIC}_\pi$ depends on the choice of $\pi$.
Here we provide two examples of the information criteria by considering different distributions as the smooth part of the mixture prior.
Notice that we can specify the second term of $f_\pi(\mY \mid w_j, \hat\mTheta_j, \hat\mSigma_j),$ which is independent of $\pi,$ as follows:
\begin{align*}
  f(\mY \mid \mX_j \hat \mTheta_j, \hat\mSigma_j)
  =
  \frac{e^{-np/2}}{(2\pi)^{np/2}|\hat\mSigma_j|^{n/2}}
  .
\end{align*}

At first, consider the case of the normal distribution: $\pi(\mTheta_j \mid \mSigma) = f(\mTheta_j \mid \mO_{k_j \times p}, \mSigma),$ which satisfies Assumption \ref{ass:pi} as stated in the previous subsection.
By the ordinary marginalization procedure, we have
\begin{align*}
  f_\pi(\mY \mid \hat \mSigma_j)
  =
  \frac{\etr(-(1/2) \hat \mSigma_j^{-1}\mY^\top (\mI_n+\mX_j \mX_j^\top )^{-1}\mY)}{(2\pi)^{np/2}|\hat \mSigma_j|^{n/2}|\mI_n+\mX_j \mX_j^\top |^{p/2}}
  ,
\end{align*}
where $\etr(\cdot) = \exp(\tr(\cdot)).$
Thus, the information criterion we propose for the normal distribution is
\begin{align*}
  {\rm MPIC}_{\text{Normal}}(j)
  &=
  n \log|\hat\mSigma_j|
  +
  np (\log 2\pi + 1)
  +
  m_{\text{Normal}}(j)
  ,
\end{align*}
where
\begin{align*}
  m_{\text{Normal}}(j)
  =
  \frac{np(2k_j + p + 1)}{n - p - k_j - 1}
  -2 \log \left\{
  \frac
  {(1 - w_j)\etr((np/2) \mI_p - (1/2)\hat \mSigma_j^{-1}\mY^\top (\mI_n+\mX_j \mX_j^\top )^{-1}\mY)}
  {|\mI_n+\mX_j \mX_j^\top |^{p/2}}
  +
  w_j
  \right\}
  .
\end{align*}

The second example is for the uniform distribution: $\pi(\mTheta_j \mid \mSigma) = |\mSigma|^{-k_j/2},$  which also meets Assumption \ref{ass:pi}.
In this case, the marginal likelihood is
\begin{align*}
  f_\pi(\mY \mid \hat \mSigma_j)
  =
  \frac{e^{-np/2}}{(2\pi)^{(n-k_j)p/2}|\hat\mSigma_j|^{n/2}|\mX_j^\top \mX_j|^{p/2}}
  ,
\end{align*}
and therefore, the information criterion for the uniform prior is
\begin{align*}
  {\rm MPIC}_{\text{Uniform}}(j)
  &=
  n \log|\hat\mSigma_j|
  +
  np (\log 2\pi + 1)
  +
  m_{\text{Uniform}}(j)
  ,
\end{align*}
where
\begin{align*}
  m_{\text{Uniform}}(j)
  =
  \frac{np(2k_j + p + 1)}{n - p - k_j - 1}
  -
  2 \log \left\{
    \frac
    {(1 - w_j)(2\pi)^{k_jp/2}}
    {|\mX_j^\top \mX_j|^{p/2}}
    +
    w_j
  \right\}
  .
\end{align*}

Although the $\text{MPIC}_\pi$ depends on $\pi$ as we have seen in these two examples, it has an approximation that is independent of $\pi$.
We can rewrite the $m_\pi$ as
\begin{align*}
  m_\pi(j)
  =
  \frac{np(2k_j + p + 1)}{n - p - k_j - 1}
  -
  2 \log w_j
  -
  2 \log \left\{1 + \frac{(1 - w_j) f_\pi(\mY \mid \hat\mSigma_j)}{w_j f(\mY \mid \mX_j \hat \mTheta_j, \hat\mSigma_j)}\right\}
  .
\end{align*}
On the last term, we can show the following lemma.
The proof is given in the appendix.

\begin{lemma} \label{lem:bf}
  Under Assumption \ref{ass:pi} and (\ref{eq:x}),
  \begin{align*}
    \log \left\{1 + \frac{(1 - w_j) f_\pi(\mY \mid \hat\mSigma_j)}{w_j f(\mY \mid \mX_j \hat\mTheta_j, \hat\mSigma_j)}\right\}
    =
    o_p(1)
  \end{align*}
  holds.
\end{lemma}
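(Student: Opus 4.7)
The plan is to establish that $(1-w_j)f_\pi(\mY\mid\hat\mSigma_j)/[w_j f(\mY\mid\mX_j\hat\mTheta_j,\hat\mSigma_j)]$ is dominated by a \emph{deterministic} quantity that vanishes under (\ref{ass:unif_asy}); the claim then follows from the elementary fact that $\log(1+o(1))=o(1)$.

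I would first exploit the normal equation $\mX_j^\top(\mY-\mX_j\hat\mTheta_j)=\mO$ to expand the quadratic form inside the exponent of $f(\mY\mid\mX_j\mTheta_j,\hat\mSigma_j)$, yielding the identity $(\mY-\mX_j\mTheta_j)^\top(\mY-\mX_j\mTheta_j)=n\hat\mSigma_j+(\mTheta_j-\hat\mTheta_j)^\top\mX_j^\top\mX_j(\mTheta_j-\hat\mTheta_j)$. After pulling the factor $f(\mY\mid\mX_j\hat\mTheta_j,\hat\mSigma_j)$ out of the integral defining $f_\pi(\mY\mid\hat\mSigma_j)$, the ratio reduces to
$$\int \exp\!\left\{-\tfrac{1}{2}\tr[\hat\mSigma_j^{-1}(\mTheta_j-\hat\mTheta_j)^\top\mX_j^\top\mX_j(\mTheta_j-\hat\mTheta_j)]\right\}\pi(\mTheta_j\mid\hat\mSigma_j)\,d\mTheta_j.$$

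Next, I would substitute $\mU=(\mTheta_j-\hat\mTheta_j)\hat\mSigma_j^{-1/2}$. Its Jacobian $|\hat\mSigma_j|^{k_j/2}$ cancels exactly against the factor $|\hat\mSigma_j|^{-k_j/2}$ produced by the scale-family identity of Assumption \ref{ass:pi}(1), while the exponent simplifies to $-\tfrac{1}{2}\tr[\mU^\top\mX_j^\top\mX_j\mU]$. Consequently, all dependence on the random matrix $\hat\mSigma_j$ is eliminated from both the kernel and the measure, leaving only a uniform translation of the argument of $\pi(\cdot\mid\mI_p)$. Bounding $\pi(\cdot\mid\mI_p)\le \pi_M$ and evaluating the resulting Gaussian integral column by column yields the deterministic envelope
$$\frac{f_\pi(\mY\mid\hat\mSigma_j)}{f(\mY\mid\mX_j\hat\mTheta_j,\hat\mSigma_j)}\le \pi_M(2\pi)^{k_jp/2}|\mX_j^\top\mX_j|^{-p/2}.$$

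To finish, assumption (\ref{eq:x}) provides a constant $c>0$ with $|\mX_j^\top\mX_j|\ge c n^{k_j}$ for all sufficiently large $n,$ and Assumption \ref{ass:pi}(2) controls the prefactor $(1-w_j)\pi_M/[w_j n^{k_jp/2}]$ by $O(\ell_{n,p}^{-k_jp/2})$. Multiplying these estimates produces a bound of order $\bigl[(2\pi)^{k_j}/(c\,\ell_{n,p}^{k_j})\bigr]^{p/2}$, whose base tends to zero as $\ell_{n,p}\to\infty$ and whose exponent $p/2$ is at least $1/2,$ so the bound is $o(1)$ whether $p$ remains bounded or diverges.

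The sole delicate point is orchestrating the change of variables so that the random $\hat\mSigma_j$ cancels completely, converting what is a priori a $\mY$-dependent ratio into a deterministic envelope; once that cancellation is secured, the remaining estimates using (\ref{eq:x}) and Assumption \ref{ass:pi}(2) are routine arithmetic.
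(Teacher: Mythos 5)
Your proposal is correct and follows essentially the same route as the paper: both bound the ratio by the deterministic envelope $\frac{1-w_j}{w_j}\pi_M\bigl(2\pi/(n\,|\mX_j^\top\mX_j/n|^{1/k_j})\bigr)^{pk_j/2}$ obtained from the scale-family identity, the sup bound $\pi_M$, and the Gaussian integral, and then kill it using Assumption \ref{ass:pi}(2) together with (\ref{eq:x}) and $\ell_{n,p}\to\infty$. The only (harmless) difference is that you spell out the change of variables explicitly, whereas the paper reuses the computation already carried out in the proof of Theorem \ref{thm:bias} with $\tilde\mY=\mY$, in which case the exponential factor there equals one.
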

\noindent

From Lemma \ref{lem:bf}, $\text{MPIC}_\pi$ can be approximated by
\begin{align*}
  {\rm MPIC}_{\text{Approx}}(j)
  &=
  n \log|\hat\mSigma_j|
  +
  np (\log 2\pi + 1)
  +
  m_{\text{Approx}}(j)
  ,
\end{align*}
where
\begin{align*}
  m_{\text{Approx}}(j)
  =
  \frac{np(2k_j + p + 1)}{n - p - k_j - 1}
  -
  2 \log w_j
  .
\end{align*}
This information criterion is independent of the choice of $\pi$.
A similar approximation procedure also appears in the derivation of the BIC, where one first computes the marginal likelihood and then shows that the effect of the prior is at most $O_p(1)$ by the Laplace approximation.
Because the $\text{MPIC}_\pi$ is approximated by the same criterion no matter what distribution one uses as the smooth part of the mixture prior, the choice of $\pi$ is not very significant in terms of variable selection.
In other words, the variable selection method based on the MPIC is robust to a prior misspecification.

\section{Consistency} \label{sec:consistency}
As explained in the last section, the new information criterion $\text{MPIC}_\pi$ is regarded as a mixture of the exact AIC and the BIC, or BIC-variants.
Recall that in the LS asymptotic framework, the BIC is consistent while the AIC is not, and vice versa in the HD asymptotic framework.
Thus, if the weight $w_j$ varies appropriately according to the data, we can expect that the $\text{MPIC}_\pi$ is consistent in (\ref{ass:unif_asy}).
In this section, we show that this intuition is actually true under some additional conditions.

We begin by describing some notation.
The set $\cJ$ of candidate models is separated into two parts: $\cJ_+ = \{j \in \cJ \mid j_\ast \subset j\}.$ and $\cJ_- = \{j \in \cJ \mid j_\ast \not\subset j\}.$
We define the noncentrality matrix as
\begin{align*}
  \mSigma_\ast^{-1/2} \mTheta_\ast^\top  \mX_\ast^\top  (\mI_n - \mP_j) \mX_\ast \mTheta_\ast \mSigma_\ast^{-1/2},
\end{align*}
which plays a great role when we investigate properties of the MLE of the covariance matrix.
For $j \in \cJ_+,$ we have $\mP_j\mX_\ast = \mO_{n \times k_\ast},$ and so the noncentrality matrix is always zero.
On the other hand, it may not vanish for $j \in \cJ_-.$
Let $\gamma_j$ be the rank of the noncentrality matrix.
When $\gamma_j > 0,$  there exists a $p \times \gamma_j$ full-rank matrix $\mGamma_j$ such that
\begin{align*}
  \mSigma_\ast^{-1/2} \mTheta_\ast^\top  \mX_\ast^\top  (\mI_n - \mP_j) \mX_\ast \mTheta_\ast \mSigma_\ast^{-1/2}
  =
  \mGamma_j\mGamma_j^\top ,
\end{align*}
because the noncentrality matrix is symmetric.
Note that since $\gamma_j \leq p,$ we have $\rank \mGamma_j = \gamma_j.$
Besides, $\mGamma_j^\top \mGamma_j$ is a symmetric and positive definite matrix, so that there exists a $\gamma_j \times \gamma_j$ invertible matrix $\mDelta_j$ such that $\mDelta_j^2 = \mGamma_j^\top \mGamma_j.$
Finally, $\lambda_j$ denotes the smallest eigenvalue of $\mDelta_j^2;$ that is, $\lambda_j=\lambda_{\text{min}}(\mDelta_j^2).$

To show the consistency of our information criteria, we extend the results given by \cite{yanagihara2015consistency} under (\ref{ass:unif_asy}).
They provided some sufficient conditions of the consistency of information criteria with the form of
\begin{align} \label{eq:ic_family}
  {\rm IC}_m(j)
  =
  n\log|\hat \mSigma_j| + np(\log2\pi + 1) + m(j)
  .
\end{align}
Many information criteria, such as the AIC, the exact AIC, and the BIC, belong to this class.

We introduce the following assumptions:
\begin{assumption} \label{ass:cons_ass}
  \mbox{}
  \begin{enumerate}
    \renewcommand{\labelenumi}{(\arabic{enumi})}
    \item $j_\ast \in \cJ;$ \label{ass:true_model}
    \item For $j \in \cJ_-,$  $\gamma_j$ is constant, and $\liminflh (np)^{-1}\lambda_j>0,$ \label{ass:ncm}
  \end{enumerate}
\end{assumption}
\noindent
which were essentially assumed in \cite{yanagihara2015consistency} as well.
The first assumption is trivial since we are focusing on consistency properties of variable selection.
If the second one fails, this means that the data is too uninformative because some information $\mX_\ast$ and $\mTheta_\ast$ have is asymptotically oppressed by the noise.
\cite{yanagihara2015consistency} also provide specific conditions for a two-way MANOVA model with a certain structure and claim that this assumption is realistic.

The theorem below characterizes the consistency properties of information criteria that belong to (\ref{eq:ic_family}).
The statement when $p$ is unbounded is exactly the same as Theorem 3.2 of \cite{yanagihara2015consistency}, and that for bounded $p$ is a direct extension of their Theorem 3.1.
In their study, it is assumed that $p$ is constant, but here we relax this assumption.
The proof is given in the appendix.

\begin{theorem} \label{thm:cons_ic}
  Assume Assumption \ref{ass:cons_ass}.
  Then $IC_m$ is consistent in (\ref{ass:unif_asy}) if either of the following two conditions holds:
  \begin{enumerate}
    \renewcommand{\labelenumi}{(\arabic{enumi})}
    \item $p$ is unbounded, (HD-1) $\limlh (n \log p)^{-1}(m(j) - m(j_\ast)) > - \gamma_j$ for all $j \in \cJ_-,$ and (HD-2) $\limlh p^{-1}(m(j) - m(j_\ast)) > - c_0^{-1} (k_j - k_\ast) \log(1 - c_0)$ for all $j \in \cJ_+ \setminus \{j_\ast\};$
    \label{cond:hd}
    \item $p$ is bounded, (LS-1) $\limlh n^{-1}(m(j) - m(j_\ast)) \geq 0$ for all $j \in \cJ_-,$ and (LS-2) $\limlh (m(j) - m(j_\ast)) = \infty$ for all $j \in \cJ_+ \setminus \{j_\ast\}.$
    \label{cond:ls}
  \end{enumerate}
\end{theorem}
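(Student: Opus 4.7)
The plan is to reduce the consistency claim to a case-by-case asymptotic analysis of the log-determinant difference. Since $\cJ$ is finite, it suffices to fix any $j \in \cJ \setminus \{j_\ast\}$ and show that $\pr(\mathrm{IC}_m(j) > \mathrm{IC}_m(j_\ast)) \to 1$ under (\ref{ass:unif_asy}). The decomposition
\[
\mathrm{IC}_m(j) - \mathrm{IC}_m(j_\ast) = n \log \frac{|\hat\mSigma_j|}{|\hat\mSigma_{j_\ast}|} + \bigl(m(j) - m(j_\ast)\bigr)
\]
isolates the random part as the log-determinant ratio, and the conditions (HD-1)/(HD-2) or (LS-1)/(LS-2) supply precisely the lower bounds on the penalty differential needed to dominate that ratio. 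I will split into the overfitting set $\cJ_+ \setminus \{j_\ast\}$ and the underfitting set $\cJ_-$.

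For $j \in \cJ_+$, the identity $\mP_j \mX_\ast = \mX_\ast$ gives $n \hat\mSigma_j = \mE^\top (\mI_n - \mP_j)\mE$ for $\mE = \mY - \mX_\ast\mTheta_\ast$, and the orthogonal decomposition $\mI_n - \mP_{j_\ast} = (\mI_n - \mP_j) + (\mP_j - \mP_{j_\ast})$ yields $n\hat\mSigma_{j_\ast} = n\hat\mSigma_j + \mA_j$ with $\mA_j \sim W_p(k_j - k_\ast, \mSigma_\ast)$ independent of $n\hat\mSigma_j$. Hence $n\log(|\hat\mSigma_j|/|\hat\mSigma_{j_\ast}|) = -n\log|\mI_p + n^{-1}\mA_j \hat\mSigma_j^{-1}|$. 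For bounded $p$ this is $O_p(1)$ by consistency of $\hat\mSigma_j$ and tightness of $\mA_j$, so condition (LS-2) dominates. For unbounded $p$, I plan to use the multivariate-beta representation of $|\hat\mSigma_j|/|\hat\mSigma_{j_\ast}|$ together with a standard Wishart log-determinant expansion to obtain a deterministic leading term of order $p$ whose coefficient matches, up to sign, the threshold in (HD-2).

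For $j \in \cJ_-$, the residual $(\mI_n - \mP_j)\mY$ inherits the deterministic bias $(\mI_n - \mP_j)\mX_\ast\mTheta_\ast$, which enters $n \hat\mSigma_j$ as a noncentral-Wishart shift encoded by the rank-$\gamma_j$ matrix $\mGamma_j\mGamma_j^\top$. Rotating $\mY\mSigma_\ast^{-1/2}$ so that its first $\gamma_j$ coordinates diagonalize the noncentrality, and using that $\gamma_j$ is constant together with $\liminf(np)^{-1}\lambda_j > 0$, the log-determinant ratio splits into $n\log|\mI_{\gamma_j} + n^{-1}\mDelta_j^2 + o_p(1)|$ plus an overfitting-type term living in the orthogonal $(p - \gamma_j)$-dimensional block. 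The deterministic part contributes $\gamma_j n \log p + o(n\log p)$ in the HD regime and grows linearly in $n$ in the LS regime, in both cases positively, so combining with (HD-1) or (LS-1) closes the argument. For bounded $p$, the observation that an integer-valued, nondecreasing, bounded $p(n)$ is eventually constant reduces the LS portion directly to the constant-$p$ setting of Theorem 3.1 in \cite{yanagihara2015consistency}.

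I expect the main obstacle to be the precise asymptotic expansion of $n\log(|\hat\mSigma_j|/|\hat\mSigma_{j_\ast}|)$ in the HD underfitting regime: one must simultaneously control (i) the deterministic order-$n\log p$ contribution from the $\gamma_j$ noncentrality directions whose eigenvalues grow with $np$, (ii) the central-Wishart fluctuation of order $p$ from the remaining $p - \gamma_j$ directions, and (iii) their cross-interaction, while keeping error bounds uniform in $n$ and $p$ under (\ref{ass:asy}). The hypothesis $\liminf(np)^{-1}\lambda_j > 0$ is exactly what makes the deterministic signal dominate the Wishart noise, which is the crux of the HD case.
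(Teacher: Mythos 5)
Your proposal follows essentially the same route as the paper's proof: a union bound over the finite candidate set, the overfitting/underfitting split, the central-Wishart decomposition $n\hat\mSigma_{j_\ast} = n\hat\mSigma_j + \mA_j$ for $j\in\cJ_+$, and the noncentral-Wishart analysis in which $\liminf (np)^{-1}\lambda_j>0$ forces the rank-$\gamma_j$ signal to dominate for $j\in\cJ_-$, with the unbounded-$p$ expansions delegated to standard Wishart/multivariate-beta results exactly as the paper delegates them to \cite{yanagihara2015consistency} and \cite{yanagihara2017high}. Your observation that a bounded, nondecreasing, integer-valued $p(n)$ is eventually constant—so the LS case reduces verbatim to the constant-$p$ Theorem 3.1 of \cite{yanagihara2015consistency}—is a clean shortcut the paper does not take (it re-proves the bounded case directly), but it does not change the substance of the argument.
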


Here we regard $c_0^{-1} \log (1 - c_0) = -1$ when $c_0 = 0.$

By definition of $\text{MPIC}_\pi,$ it belongs to the family (\ref{eq:ic_family}), and so does $\text{MPIC}_{\text{Approx}}.$
Hence, we can directly use the theorem above to show their consistency properties.
To show them, we introduce the following additional assumption:
\begin{assumption} \label{ass:x}
  For all $j \in \cJ,$
  \begin{align*}
    \liminflh \left|\frac{\mX_j^\top \mX_j}{n}\right| > 0
    .
  \end{align*}
\end{assumption}
It may be time-consuming to confirm if this assumption is satisfied for all $j \in \cJ.$
However, it is remarkable that the condition
\begin{align*}
  \liminflh \lambda_{\text{min}}\left(\frac{\mX^\top \mX}{n}\right) > 0
\end{align*}
is sufficient for Assumption \ref{ass:x}, and that this is much easier to check.
As is discussed above, many studies of linear regression models assume this.

From Lemma \ref{lem:bf}, it follows that
\begin{align*}
  m_\pi(j) - m_\pi(j_\ast)
  =
  \frac{np(2n - p - 1)(k_j - k_\ast)}{(n - p - k_j - 1)(n - p - k_\ast - 1)} + 2 \log\frac{w_\ast}{w_j}
  +
  o_p(1)
  ,
\end{align*}
and
\begin{align*}
  m_{\text{Approx}}(j) - m_{\text{Approx}}(j_\ast)
  =
  \frac{np(2n - p - 1)(k_j - k_\ast)}{(n - p - k_j - 1)(n - p - k_\ast - 1)} + 2 \log\frac{w_\ast}{w_j}
  .
\end{align*}
It is clear that $\text{MPIC}_\pi$ and $\text{MPIC}_{\text{Approx}}$ satisfy the assumptions given in the theorem above if $w_j$ is properly set.
Consequently, on their consistency properties, we have the following theorem.
We omit the proof since this follows immediately from Theorem \ref{thm:cons_ic}.

\begin{theorem} \label{thm:cons_mpic}
  Assume Assumption \ref{ass:pi}, Assumption \ref{ass:cons_ass}, and Assumption \ref{ass:x}.
  Then $\text{MPIC}_\pi$ and $\text{MPIC}_{\text{Approx}}$ are consistent in (\ref{ass:unif_asy}) if either of the following two conditions holds:
  \begin{enumerate}
    \renewcommand{\labelenumi}{(\arabic{enumi})}
    \item $p$ is unbounded, (HD-1') $\limlh (n \log p)^{-1} \log (w_\ast /  w_j)>- \gamma_j / 2$ for all $j \in \cJ_-,$ and (HD-2') $\limlh p^{-1} \log (w_\ast /  w_j) > - 2^{-1} (k_j - k_\ast) \left\{c_0^{-1}\log(1 - c_0) + (2 - c_0)(1 - c_0)^{-2}\right\}$ for all $j \in \cJ_+ \setminus \{j_\ast\};$
    \item $p$ is bounded, (LS-1') $\limlh n^{-1} \log (w_\ast /  w_j) \geq 0$ for all $j \in \cJ_-,$ and (LS-2') $\limlh \log (w_\ast /  w_j) = \infty$ for all $j \in \cJ_+ \setminus \{j_\ast\}.$
  \end{enumerate}
\end{theorem}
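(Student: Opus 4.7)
The plan is to verify the hypotheses of Theorem~\ref{thm:cons_ic} applied to $m_{\text{Approx}}$, and then transfer the conclusion to $\text{MPIC}_\pi$ via Lemma~\ref{lem:bf}. Both criteria belong to the family (\ref{eq:ic_family}), so consistency follows once the conditions HD-1/HD-2 (if $p$ is unbounded) or LS-1/LS-2 (if $p$ is bounded) are checked for the relevant penalty.

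The core calculation is the asymptotic behavior of the deterministic remainder
\[
R_{j} \;=\; \frac{np(2n-p-1)(k_j-k_\ast)}{(n-p-k_j-1)(n-p-k_\ast-1)}
\]
that appears in $m_{\text{Approx}}(j) - m_{\text{Approx}}(j_\ast)$. Writing $c_{n,p} = p/n$ and using $k_j = O(1)$, a direct expansion yields
\[
R_{j} \;=\; \frac{(2-c_{n,p})(k_j-k_\ast)}{(1-c_{n,p})^2}\, p\,\bigl(1+o(1)\bigr).
\]
If $p$ is unbounded, then $(n \log p)^{-1} R_{j} \to 0$, so HD-1 collapses to HD-1'; and $p^{-1} R_{j} \to (2-c_0)(k_j-k_\ast)/(1-c_0)^2$, so HD-2 becomes, after rearrangement, exactly HD-2'. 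If $p$ is bounded, then $n^{-1} R_{j} \to 0$ (so LS-1 becomes LS-1') while $R_{j} = O(1)$ (so LS-2 becomes LS-2'). Feeding the assumed HD-1'/HD-2' or LS-1'/LS-2' into Theorem~\ref{thm:cons_ic} therefore gives the consistency of $\text{MPIC}_{\text{Approx}}$.

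To transfer the conclusion to $\text{MPIC}_\pi$, I would invoke Lemma~\ref{lem:bf}, whose hypothesis (\ref{eq:x}) holds at every $j \in \cJ$ by Assumption~\ref{ass:x}. It yields
\[
\text{MPIC}_\pi(j) - \text{MPIC}_\pi(j_\ast) \;=\; \text{MPIC}_{\text{Approx}}(j) - \text{MPIC}_{\text{Approx}}(j_\ast) + o_p(1).
\]
The only delicate point is that Theorem~\ref{thm:cons_ic} is stated for a deterministic penalty, whereas $m_\pi$ carries an $o_p(1)$ stochastic correction. I would handle this directly at the level of probabilities: the argument behind Theorem~\ref{thm:cons_ic} applied to $m_{\text{Approx}}$ shows that for each $j \neq j_\ast$ the difference on the right-hand side diverges to $+\infty$ in probability. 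Since $\cJ$ is finite, the $o_p(1)$ perturbation cannot flip the sign asymptotically, so $\pr(\hat j_\ast = j_\ast) \to 1$ for $\text{MPIC}_\pi$ as well.
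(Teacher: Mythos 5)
Your proposal is correct and follows essentially the same route as the paper: the paper omits the proof of this theorem precisely because it reduces to checking the conditions of Theorem~\ref{thm:cons_ic} against the displayed expansion of $m_{\text{Approx}}(j)-m_{\text{Approx}}(j_\ast)$ and $m_\pi(j)-m_\pi(j_\ast)$, which is exactly the computation you carry out, and your translation of HD-1/HD-2/LS-1/LS-2 into HD-1$'$/HD-2$'$/LS-1$'$/LS-2$'$ matches. Your explicit treatment of the stochastic $o_p(1)$ term in $m_\pi$ (via finiteness of $\cJ$ and divergence in probability of each pairwise difference) is a point the paper leaves implicit, and it is handled correctly.
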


\section{Discussion} \label{sec:discussion}
Recall that $w_j$ lies in $(0, 1)$ and is a parameter that determines whether the data is high-dimensional or not.
To use the $\text{MPIC}_\pi$ in practical applications, one must determine $w_j$ so that it satisfies the conditions of the consistency.
In this section, we propose some ways to determine $w_j.$
One simple example of $w_j$ is
\begin{align} \label{eq:simple_w}
  w_j^\prime
  =
  n^{-\varepsilon k_j}
\end{align}
for some positive $\varepsilon.$
It is easy to show that this satisfies all the assumptions provided above.

It is also possible to set a prior for $w_j$ and take its posterior mean as the weight so that it depends on the data as well as $n, p,$ and $k_j.$
For example, assume that $w_j \sim \text{Beta}(\alpha_j, \beta_j)$ for $\alpha_j, \beta_j > 0.$
Then the posterior mean of $w_j$ in the model (\ref{eq:ss_model}) is
\begin{align*}
  w_j^{\prime\prime}
  =
  E[w_j \mid \mY]
  =
  \frac{\alpha_j}{\alpha_j + \beta_j + 1}
  \left(1 + \frac{f(\mY \mid \mX_j \hat \mTheta_j, \hat\mSigma_j)}{\beta_j f_\pi(\mY \mid \hat \mSigma_j) + \alpha_j f(\mY \mid \mX_j \hat \mTheta_j, \hat\mSigma_j)}\right)
  .
\end{align*}
Consider the case of $\beta_j = n^{\varepsilon k_j}$ and $\liminflh \alpha_j > 0.$
It is easy to show that
\begin{align*}
  \frac{\beta_j f_\pi(\mY \mid \hat \mSigma_j)}{f(\mY \mid \mX_j \hat \mTheta_j, \hat\mSigma_j)}
  =
  o_p(1)
  ,
\end{align*}
and so
\begin{align*}
  w_j^{\prime\prime}
  =
  \frac{\alpha_j + 1}{\alpha_j + \beta_j + 1} + o_p(1)
\end{align*}
for a broad class of smooth distributions.
When $\alpha_j$ is constant, $w_j^{\prime\prime}$ is asymptotically equivalent to (\ref{eq:simple_w}) and induces a consistent variable selection method.
When $\alpha_j = p^{\varepsilon k_j},$ the weight above is approximated by (\ref{eq:w}).
Because this weight depends on $p,$ one can regard it as a measure of the high-dimensionality of the data.
Indeed, it is almost zero when $n$ is large enough, and it is away from zero if $p$ is large and its order is comparable to that of $n.$

We currently have no way to set an optimal $w_j$ in general, but here we focus on the weight with the form of (\ref{eq:w}) and provide a reasonable way to choose $\varepsilon$ in (\ref{eq:w}).
As we discussed in Section \ref{sec:new_information_criterion}, when we take (\ref{eq:w}) for $\varepsilon \in (0, 1/2),$ the multivariate scale invariance and the uniform boundedness of $\pi$ are sufficient for Assumption \ref{ass:pi}.
In other words, $\varepsilon$ should be small so that all scale-invariant and uniformly bounded distributions satisfy Assumption \ref{ass:pi}; otherwise, the existence of $\ell_{n, p}$ can fail for small $p.$
However, too small $\varepsilon$ is not desirable in terms of consistency for the following reason.
As we can see in the proof of Theorem \ref{thm:cons_ic}, which is provided in the \textcolor{black}{appendix}, the asymptotic behavior of $\text{MPIC}_\pi(j) - \text{MPIC}_\pi(j_\ast)$ determines whether MPIC is consistent, and $\log (w_\ast / w_j)$ should be large to make the selection probability of the true model high because the difference increases as the logarithm grows.
Here we have $\log (w_\ast / w_j) = \varepsilon (k_j - k_\ast) \log(n / p) + O(1),$ and so when $j \in \cJ_-,$ it does not affect the consistency property of the MPIC because $\text{MPIC}_\pi(j) - \text{MPIC}_\pi(j_\ast) = O_p(n \log p),$ which is much larger than $\log (w_\ast / w_j).$
On the other hand, in the case of $j \in \cJ_+ \setminus \{j_\ast\},$ we can see that the difference between the information criterion of the true model and that of a candidate model is $O_p(p \vee \log (w_\ast / w_j)).$ That is, $\log (w_\ast / w_j)$ is asymptotically effective and should be large for a good variable seletion.
Hence, in terms of consistency, large $\varepsilon$ is preferable.
To sum up, when we use (\ref{eq:w}) as the weight, $\varepsilon$ should be as large as possible under the condition that it is less than $1/2.$

\section{Numerical Studies} \label{sec:numerical_study}
\subsection{Simulations}
In this section, we compare the probabilities of selecting the true model by the AIC, the $\text{AIC}_\text{C}$ (the exact AIC), the BIC, the GIC (\ref{eq:gic}), and the $\text{MPIC}_{\text{Approx}}.$
The same experiments as below for the $\text{MPIC}_\pi$ for $\pi(\mTheta_j \mid \mSigma) = f(\mTheta_j \mid \mO_{k_j \times p}, \mSigma)$ (a normal distribution) and $\pi(\mTheta_j \mid \mSigma) = |\mSigma|^{-k_j/2}$ (a scaled uniform distribution) give similar performances to those of the $\text{MPIC}_{\text{Approx}},$ so we omit them.
We set $\beta = (\log n) p^{-1/2}$ for the GIC and use (\ref{eq:w}) with $\varepsilon = 0.499,$ which is almost a half, as the weight for the $\text{MPIC}_{\text{Approx}}.$
We evaluate these information criteria by Monte Carlo simulations based on $1000$ replications under different values of $n$ and $p.$
We generate an $n \times (k-1)$ matrix \textcolor{black}{independently and identically drawn from a uniform distribution $U(-2, 2).$}
Concatenating the $n$-dimensional vector $\bm{1}_n$ of ones with this matrix, we construct an $n \times k$ matrix $\mX$ of explanatory variables.
The true model is determined by $j_\ast = \{1, 2, 3, 4, 5\},$ \textcolor{black}{$\mTheta_\ast = (5, 4, 3, 2, 1)^\top \bm{1}_p,$} and $\mSigma_\ast = 0.8\mI_p + 0.2\bm{1}_p\bm{1}_p^\top .$
These are common settings in numerical studies of variable selection of the linear regression model (e.g., \cite{bedrick1994model}).

In the first experiment, we consider the case when the set of candidate models is nested; that is, $\cJ = \{j_1, \cdots, j_{10}\},$ where $j_\alpha = \{1, \cdots, \alpha\}.$
The results are given in Figure \ref{fig:prob_nested}.
First of all, we can see that the AIC and the exact AIC select the true model nearly with probability one when both the data size and the dimension are large, and that their selection probabilities do not attain one when only the data size is large, to the contrary.
In the cases of the BIC, the selection probabilities get higher as the data size gets larger with the dimension fixed, but when the dimension is also large, it does not work at all.
Our new information criterion and the GIC seem consistent in both LS and HD cases, but the performances of the MPIC are better than those of the GIC especially when the data size is not very large.

\begin{figure}[h]
\centering
\includegraphics[width=14cm]{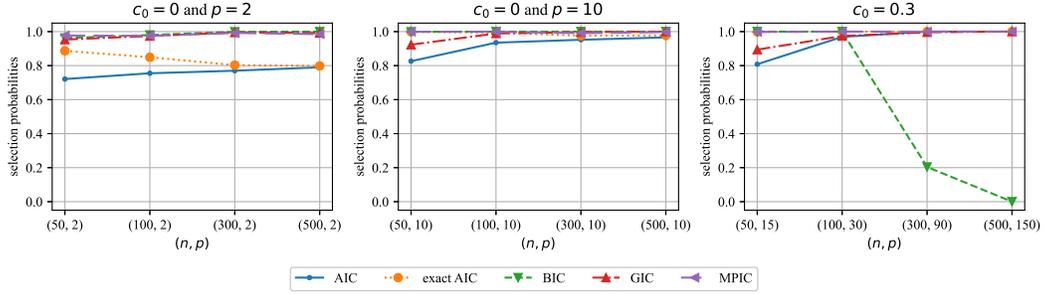}
\caption{\label{fig:prob_nested} Selection probabilities for nested models}
\end{figure}

\textcolor{black}{Next, we consider the case when the set of candidate models is non-nested; that is, $\cJ = \{\{1\} \cup j \mid j \subset \omega \setminus \{1\}\},$ where $\omega = \{1, \cdots, 8\}.$
This specification implies that all the candidate models have a constant term.}
The results are given in Figure \ref{fig:prob_nonnested}.
As in the first setting above, we can see that the MPIC selects the true model consistently.
Compared with the GIC, the MPIC performs well especially when $n$ is not very large.

\begin{figure}[h]
\centering
\includegraphics[width=14cm]{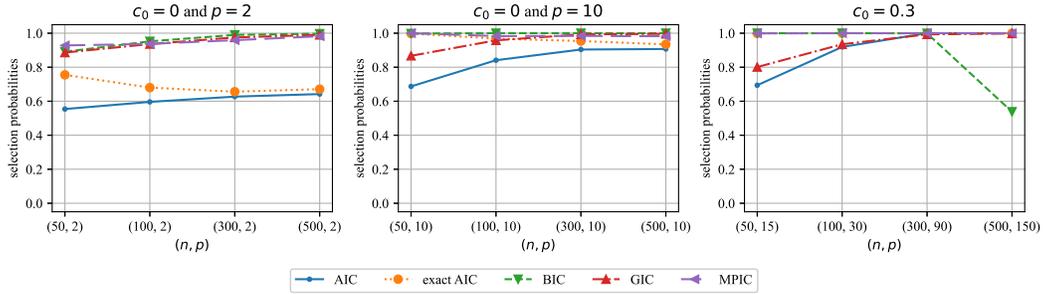}
\caption{\label{fig:prob_nonnested} Selection probabilities for non-nested models}
\end{figure}

Aside from consistency, efficiency of model selection methods has been discussed in previous studies (e.g., \cite{shibata1980asymptotically}, \cite{shibata1981optimal}, \cite{shibata1983asymptotic}, and \cite{shao1997asymptotic}).
In particular, \cite{yang2005can} shows that any consistent model selection criterion is not minimax-rate optimal in terms of prediction; that is, it is not asymptotically efficient in the LS asymptotic framework for some regression models.
This study implies that the information criteria proposed in this paper might not have the asymptotic efficiency even in the linear regression model we consider above.
The next experiment allows us to examine how inefficient our selection method is in this setup.
As a measure of prediction error, we adopt the squared loss function following \cite{shibata1983asymptotic}:
\begin{align*}
    L(\hat j)
    =
    ||\mX_\ast \mTheta_\ast - \mX_{\hat j} \hat \mTheta_{\hat j}||^2
    ,
\end{align*}
and consider the ratio to the risk for the true model:
\begin{align*}
    \text{eff}(\hat j)
    =
    \frac{E[L(\hat j)]}{E[L(j_\ast)]}
    .
\end{align*}
A selection $\hat j$ is said to be asymptotically (mean) efficient if $\liminflh \text{eff}(\hat j) = 1.$
The ratio for each information criterion is reported in Table \ref{tab:efficiency}.
According to this, the MPIC is as efficient as the AIC not only in the LS but also in the HD framework.
As \cite{yang2005can} points out, it is possible that the MPIC does not have asymptotic efficiency for some models, but this does not seem to be problematic for linear models we address in this paper.
\begin{table}[htb]
\begin{center}
\caption{\label{tab:efficiency} Risk ratio for nested models}
\vspace{0.2cm}
\begin{tabular}{ccccccc}
\toprule
$n$  & $p$ & AIC   & $\text{AIC}_{\text{C}}$ & BIC   & GIC   & $\text{MPIC}_{\text{Approx}}$ \\
\midrule
50  & 2   &  1.092442 &  1.027110 &     1.010272 &  1.010875 &           1.005960 
\\
100 & 2   &  1.066522 &  1.039487 &     1.004284 &  1.007055 &           1.006684 
\\ 
300 & 2   &  1.065083 &  1.055309 &     1.003342 &  1.004176 &           1.005278 
\\
500 & 2   &  1.055427 &  1.049095 &     1.001377 &  1.002763 &           1.003802 
\\
\midrule
50  & 10  &  1.035427 &  0.998426 &     0.998426 &  1.011384 &           0.998426 
\\
100 & 10  &  1.018776 &  1.001509 &     1.000316 &  1.004279 &           1.000709 
\\
300 & 10  &  1.010330 &  1.004110 &     1.000135 &  1.000541 &           1.001543 
\\
500 & 10  &  1.010607 &  1.005468 &     0.999969 &  1.000359 &           1.000983 
\\
\midrule
50  & 15  &  1.048427 &  1.000570 &     1.000570 &  1.023782 &           1.009885 
\\
100 & 30  &  1.005265 &  0.998728 &     0.998728 &  1.004270 &           0.998728 
\\
300 & 90  &  1.000827 &  1.000827 &  1924.787382 &  1.001246 &           1.000827 
\\
500 & 150 &  1.001262 &  1.001262 &  3994.138044 &  1.001658 &           1.001262 
\\
\bottomrule
\end{tabular}
\end{center}
\end{table}

\textcolor{black}{
We now consider to what extent the non-normality affects model selection based on each information criterion.
To see this, we think about the same linear model but different error term distributions. 
The error is assumed to be generated as $\mE = \mDelta \mSigma^{1/2}$ where each element of $\mDelta$ is generated from the following four distributions: 
(i) the Laplace distribution with scale parameter $0.5$, 
(ii) the $t$-distribution with degree $4$, 
(iii) the $\chi^2$ distribution with degree $2$, and 
(iv) the standard normal distribution $0.05$-contaminated by the Cauchy distribution.
The performances given in Figure \ref{fig:prob_robust} are similar to those for the normal distribution.
The selection probabilities of the (exact) AIC are high when $p$ is large enough, and the BIC selects the true model with a high probability for LS data.
Selection probabilities of the GIC and the MPIC are high similarly.
}
\begin{figure}[h]
\centering
\includegraphics[width=14cm]{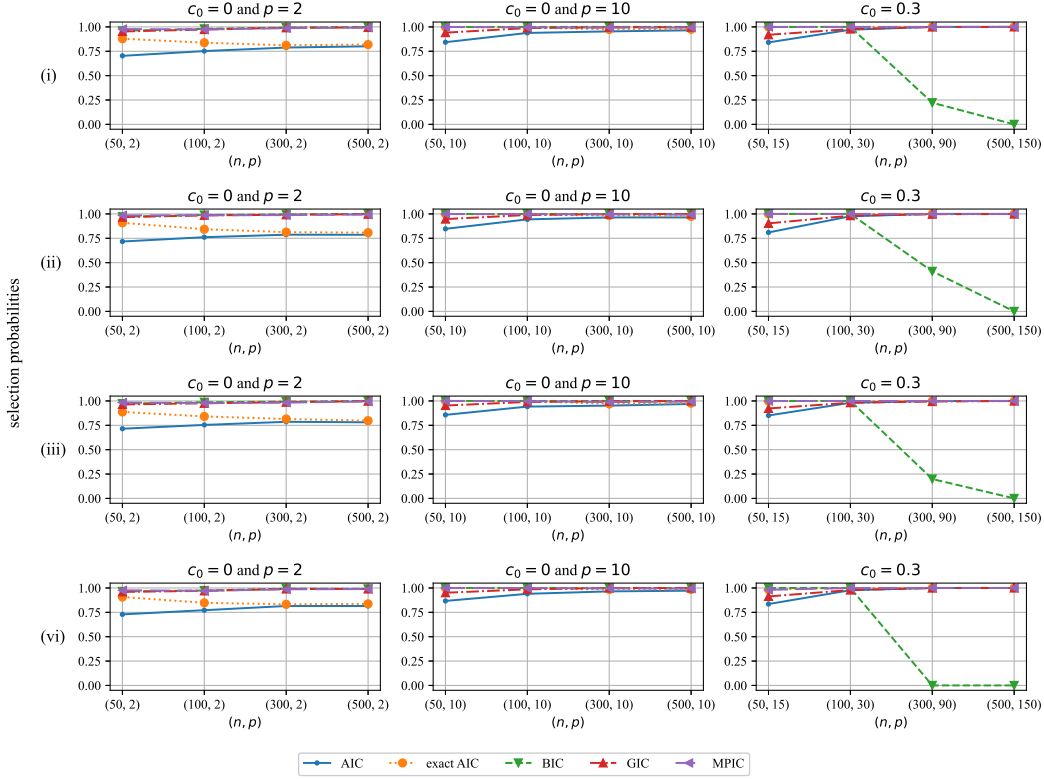}
\caption{\label{fig:prob_robust} Selection probabilities for non-normal models}
\end{figure}

We end this subsection with a numerical experiment on the choice of $w_j.$
We compare the selection probabilities by the $\text{MPIC}_{\text{Approx}}$ for (\ref{eq:w}) with $\varepsilon = 10, 5, 2.5, 1, 0.5, 0.499, 0.25, 0.1.$
It is noted that the assumptions of Theorem \ref{thm:cons_mpic} are satisfied for all of these $\varepsilon.$
However, all of the multivariate-scale invariant and uniformly bounded prior distributions satisfy Assumption \ref{ass:pi} for the last three whereas for the other four, some of such priors, including the normal distribution and the scaled uniform distribution, violate it when $p$ is small, and so the derivation of the MPIC is not justified in the way we propose in Section \ref{sec:new_information_criterion}.
Figure \ref{fig:epsilon_comparison} shows probabilities of selecting the true model out of the nested set of candidate models under the same setting as in the first experiment.
As one can see in the figure, all criteria select the true model with high probability when $n$ is sufficiently large, which is consistent with the statement of Theorem \ref{thm:cons_mpic}.
As we discuss in Section \ref{sec:discussion}, for $\varepsilon < 1/2,$ the larger $\varepsilon$ gets, the better performances the criterion exhibits especially when $n$ and $p$ are small.
Even when $\varepsilon$ exceeds half, the selection probabilities are not very deteriorated, or rather they are better in many cases.
For too large $\varepsilon,$ say $\varepsilon = 10,$ the performances are unstable when $n = 50.$
This is because the term of $O(1)$ is not ignorable and has a large impact on variable selection when $n$ is small.
These experiments imply the possibility that Assumption \ref{ass:pi} is not necessary for the derivation of the MPIC, and that the optimal $\varepsilon$ in variable selection may be greater than $1/2.$

\begin{figure}[h]
\centering
\includegraphics[width=14cm]{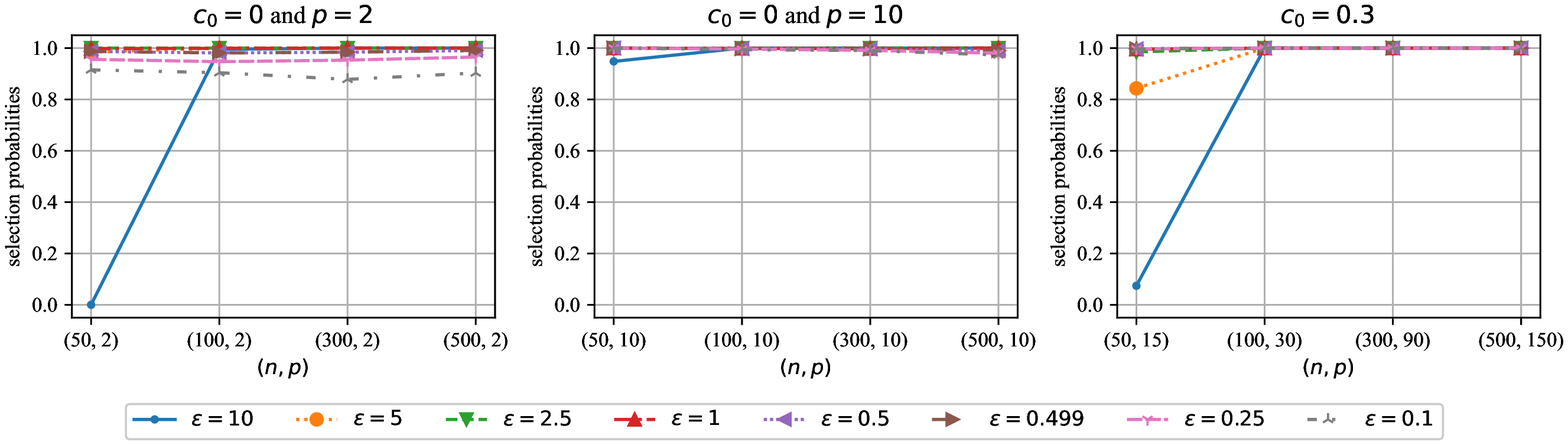}
\caption{\label{fig:epsilon_comparison} Selection probabilities for different $\varepsilon$}
\end{figure}

\subsection{\textcolor{black}{Illustrative Example}}
In this subsection, we employ model selection methods to a factor asset pricing model, which plays a significant role in modern financial economics (e.g., \cite{fama1993common} and \cite{fama2015five}).
We consider the following model:
\begin{align*}
    R_{t}
    =
    \mTheta^\top X_t
    +
    \varepsilon_t
    ,
    \ \
    \varepsilon_t \sim N_p(0, \mSigma)
    ,
\end{align*}
where $R_t$ is a $p$-dimensional vector of excess returns of portfolio, $X_t$ is a $k$-dimensional vector of factor portfolio excess returns, and $\mTheta$ is a $k \times p$ matrix of coefficients.
Furthermore, we allow each entry of the error term to exhibit an AR(1) structure.
That is, $\varepsilon_t$ follows
\begin{align*}
    \varepsilon_{t, i}
    =
    \rho \varepsilon_{t - 1, i}
    +
    \omega_{t, i}
    ,
\end{align*}
where $\omega_t \sim N_p(0, 1).$
Note that the variance of $\omega_{t,i}$ can be assumed to be one without loss of generality because the scale parameter can be incorporated into $\mSigma.$

The model we consider is different from what we have investigated so far in that $\{\varepsilon_t\}$ is not independent.
To modify our basic model, consider the following multivariate linear regression model for non-IID observations:
\begin{align*}
    \mY
    \sim
    N_{n \times p}(\mX_j \mTheta_j, \mSigma_j \otimes \mV)
    ,
\end{align*}
where $\mV$ is an $n \times n$ covariance matrix that captures the autocorrelation structure of data.
If the true value of $\mV$ is known, this model can be boiled down to the standard model by a simple transformation:
\begin{align*}
    \mV^{-1/2} \mY
    \sim
    N_{n \times p}(\mV^{-1/2}\mX_j \mTheta_j, \mSigma_j \otimes \mI_n)
    .
\end{align*}
Provided this fact, we can reasonably apply model selection methods for the standard model to models with autocorrelation by estimating $\mV$ before the model selection.
Namely, we follow the steps below: we first estimate $\rho$ with the OLS $\hat \rho$ from all the available data; then for the estimated covariance matrix $\hat \mV = \{\hat \rho^{|i-j|}\}_{i,j},$ we execute model selection against the transformed data $({\hat \mV}^{-1/2} \mY, {\hat \mV}^{-1/2} \mX).$

As the response variables, we use monthly returns of 32 Japanese portfolios formed on size, operating profitability, and investment.
The explanatory variables include a constant, Fama-French's Japanese five factors (market excess return, SMB, HML, RMW, and CMA) proposed in \cite{fama2015five}, Japanese momentum factor (WML, cf., \cite{carhart1997persistence}), Nikkei volatility index (JNIV), which is a volatility index of Japanese stocks, and daily cases of Covid 19 in Japan.
In addition to the standard economic variables in the context of portfolio selection, we also adopt the last two as regressors because some studies claim that the volatility index and confirmed cases have an impact on the stock market (see, e.g., \cite{ang2006cross} and \cite{ashraf2020stock}).
We use the data from November, 2014 to October, 2021.
The set of candidate models is all subsets that contains a constant term, and consequently, the size of the set is $2^8 = 256.$

In this experiment, we observe two points: (i) which criterion selects which variables and (ii) which criterion has a good prediction precision.
For these purposes, we split the whole data into two parts, before and after October, 2020.
Then we first do the model selection and coefficient estimation using the first part, and after that, evaluate the prediction error with the rest.
We can specify the (training) data dimension as follows: $n = 72, p = 32,$ and $k = 9.$
As for (ii), this setup conflicts with the standard linear model we address in this paper in that $\mX,$ as well as $\mE,$ should be treated as stochastic, so theoretical results on asymptotic efficiency of model selection which we have reviewed in the previous subsection do not necessarily hold in this case.

The results are summarized in Table \ref{tab:stock}.
The AIC and the GIC select relatively overspecified models because they have weaker penalty terms.
This property implies that their predicted values are likely to be vulnerable to explanatory variables that have weak correlation with the response variables.
Indeed, their prediction errors are huge because they put too much value on daily cases of Covid 19, which fluctuate intensely in the test data.
On the other hand, the exact AIC, the BIC, and the MPIC select fewer explanatory variables and seem to stably predict the future values without being affected by unnecessary explanatory variables.
\begin{table}
\begin{center}
\caption{\label{tab:stock} Selected variables and prediction errors (the columns represent all the explanatory variables: constant term, market excess return, SMB, HML, RMW, CMA, WML, JNIV, and daily cases of Covid 19, respectively)}
\vspace{0.2cm}
\begin{tabular}{c|ccccccccc||c}
\toprule
 &  0 &  1 &  2 &  3 &  4 &  5 &  6 &  7 &  8 & prediction error
\\
\midrule
AIC &  $\checkmark$ & $\checkmark$ & $\checkmark$ & $\checkmark$ & $\checkmark$ & $\checkmark$ & & & $\checkmark$  & 57.62799
\\
$\text{AIC}_{\text{C}}$ &  $\checkmark$ &  $\checkmark$ &  $\checkmark$ &  &  $\checkmark$ &  $\checkmark$ &  &  &  & 3.890582
\\
BIC &  $\checkmark$ & $\checkmark$ & $\checkmark$ & $\checkmark$ & $\checkmark$ & $\checkmark$ &  &  &  & 3.80521
\\
GIC & $\checkmark$ & $\checkmark$ & $\checkmark$ & $\checkmark$ & $\checkmark$ & $\checkmark$ &  & & $\checkmark$ & 57.62799
\\
$\text{MPIC}_{\text{Approx}}$ & $\checkmark$ & $\checkmark$ & $\checkmark$ &  & $\checkmark$ & $\checkmark$ &  &  & & 3.890582
\\
\bottomrule
\end{tabular}
\end{center}
\end{table}

\section{Concluding Remarks} \label{sec:conclusion}
In this paper, we derived several variable selection criteria for the multivariate normal linear regression model based on the Bayesian mixture marginal likelihood.
We also demonstrated that they are consistent in an asymptotic framework that contains the LS and HD frameworks.
Unlike many studies that artificially adjusted the penalty term of the AIC, our methods of model selection stand on the concept of minimizing the KL risk, and they are likely to be extended to other models due to their intuitively comprehensible properties.
From this theoretical perspective, we insist that the information criteria we provide in this paper should be favorable to existing model selection methods like the GIC even though their performances are similar.

We complete this paper with some discussion on its limitations and possible directions of further research.
First, we cannot apply our proof of consistency to non-normal models as they stand, although the basic idea of the derivation of our information criteria remains valid.
We expect that it is possible that our results can be extended to elliptical distribution models by using their properties similar to the normal distribution.

It is also noted that we do not consider a prior on the whole set of possible models.
In other words, this means that the uniform distribution is set over models as a natural noninformative choice.
An interesting extension of the results given in this paper is to consider the Bayesian model selection in the framework that incorporates the model space prior following \cite{clyde2004model}.
For example, the independent Bernoulli prior $\pi(j \mid \gamma) =\gamma^{k_j}(1-\gamma)^{k_\omega - k_j}$ is a default choice, where $j$ is the index of a model, and $\gamma$ is a single hyperparameter $\gamma \in (0, 1)$.

In most of the literature on the multivariate linear regression model, the following four asymptotic frameworks have been considered: (a) $n\to\infty$ and $p$ and $k$ bounded, (b) $(n, p)\to\infty$ and $k$ bounded, (c) $(n, k)\to\infty$ and $p$ bounded, and (d) $(n, p, k)\to\infty$.
The asymptotics (a) is the large sample case and the others are high-dimensional cases.
This paper covers (a) and (b) under the constraint $p < n - k - 1$, which is required for $E[\hat \mSigma_j^{-1}]$ to exist.
This constraint may be relaxed in the derivation of the MPIC by replacing $\hat \mSigma_j$ with a ridge-type estimator $\hat \mSigma_j^{\rm R}= n^{-1}\{\mY^\top(\mI_n-\mP_j)\mY+ \hat \lambda \mI_p\}$ for an appropriate positive statistic $\hat \lambda.$
However, it is technically difficult to show the consistency in the case of $p>n$.
In the asymptotics (c), \cite{chen2008extended} suggested the extended Bayesian information criterion by introducing a penalty term that is related to the cardinality of the collection of candidate models and showed the consistency in the high-dimensional asymptotic framework (c) with $k=O(n^\gamma)$ for some constant $\gamma > 0.$
Although they considered the univariate linear regression model, their results could be extended to the multivariate model.
An interesting question is whether the results in this paper can be extended to the asymptotic case (d) with $p=O(n)$ and $k=O(n^\gamma)$ under the constraint $p <n - k - 1,$ which we leave for future research.

Although our main focus of this paper is the Bayesian information criterion, the penalized least squares methods are popular and useful for selecting variables in the high dimension.
\cite{zou2009adaptive} provided the adaptive elastic net under the $\ell_1$-penalty in the univariate linear regression model and showed the consistency in the high-dimensional case (c) with $p=1.$
In the setup of the multivariate linear regression model, \cite{chen2012sparse} suggested the sparse reduced-rank regression method through penalized least squares and showed the consistency in the large sample case (a).
\cite{li2015multivariate} derived the multivariate sparse group lasso and obtained the prediction and estimation error bounds, but they did not investigate the consistency.
An advantage of the suggested MPIC over these penalized least squares methods is that its consistency is guaranteed in the high-dimensional case (b).


\subsection*{ACKNOWLEDGMENTS}
We would like to thank the associate editor and four reviewers for many valuable comments and helpful suggestions that led to an improved version of this paper.

\printbibliography{ref}

\appendix
\section{Proofs}

\subsection{Proof of Theorem \ref{thm:bias}} \label{subsec:thm:bias}
We first provide a useful lemma.
\begin{lemma} \label{lem:e_log}
  Let $\{a_{n, p}\}$ be a sequence of positive real numbers that satisfies $a_{n,p} \exp(up) = o(1)$ for any $u \geq 0,$ and suppose that a sequence $\{Y_{n, p}\}$ of random variables is $L^2$-bounded.
  Then for $Z_{n, p} = a_{n, p} \exp(p Y_{n, p}),$
  \begin{align} \label{eq:exp_log}
    E[\log (1 + Z_{n, p})] = o(p)
  \end{align}
  holds.
\end{lemma}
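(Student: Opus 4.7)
The plan is to bound $\log(1+Z_{n,p})$ above by a truncation that replaces the logarithm with its positive part, then show that positive part has expectation $o(p)$ using Cauchy--Schwarz together with a second-moment tail bound on $Y_{n,p}$. Set $M_{n,p}:=-\log a_{n,p}$, so that $\log Z_{n,p}=-M_{n,p}+pY_{n,p}$. The hypothesis $a_{n,p}\exp(up)=o(1)$ for every $u\geq 0$ is equivalent to $M_{n,p}-up\to\infty$ for every $u\geq 0$, i.e.\ $M_{n,p}/p\to\infty$; this super-linear growth of $M_{n,p}$ is the essential quantitative input.

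First I would invoke the elementary inequality $\log(1+z)\leq \log 2 + (\log z)_+$ (which holds because $\log(1+z)\leq \log 2$ when $z\leq 1$ and $\log(1+z)\leq \log(2z)$ otherwise), combined with the trivial lower bound $\log(1+Z_{n,p})\geq 0$, to get
\begin{align*}
  0 \leq E[\log(1+Z_{n,p})] \leq \log 2 + E[(pY_{n,p}-M_{n,p})_+].
\end{align*}
It then suffices to show the last expectation is $o(p)$. For $(n,p)$ large enough that $M_{n,p}>0$, the event $\{pY_{n,p}>M_{n,p}\}$ is contained in $\{|Y_{n,p}|>M_{n,p}/p\}$, and therefore
\begin{align*}
  E[(pY_{n,p}-M_{n,p})_+]
  \leq p\,E\!\left[Y_{n,p}\,\mathbf{1}\{|Y_{n,p}|>M_{n,p}/p\}\right].
\end{align*}
By Cauchy--Schwarz, Chebyshev's inequality, and the $L^2$-bound $E[Y_{n,p}^2]\leq C$,
\begin{align*}
  p\,E\!\left[Y_{n,p}\,\mathbf{1}\{|Y_{n,p}|>M_{n,p}/p\}\right]
  \leq p\sqrt{C}\cdot \sqrt{\pr(|Y_{n,p}|>M_{n,p}/p)}
  \leq p\sqrt{C}\cdot \frac{p\sqrt{C}}{M_{n,p}}
  = \frac{Cp^{2}}{M_{n,p}}.
\end{align*}
Since $M_{n,p}/p\to\infty$, the right-hand side is $p\cdot O(p/M_{n,p})=o(p)$, which completes the argument.

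I do not anticipate a substantive obstacle; the calculation is essentially just a careful count of factors of $p$. The delicate point is that one factor of $p$ enters multiplicatively through the $pY_{n,p}$ integrand and another enters through Chebyshev's bound $\pr(|Y_{n,p}|>M_{n,p}/p)\leq Cp^{2}/M_{n,p}^{2}$, producing $p^{2}/M_{n,p}$, which is only $o(p)$ thanks to the super-linear growth of $M_{n,p}$ guaranteed by the hypothesis on $\{a_{n,p}\}$. If $L^{2}$-boundedness were weakened to $L^{1}$-boundedness the square-root inside Cauchy--Schwarz would be lost and the argument would collapse, so the assumed moment control appears essentially sharp for this line of proof.
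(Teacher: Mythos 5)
Your overall strategy is sound and genuinely different from the paper's: the paper computes $E[\log(1+Z_{n,p})]=\int_0^\infty \pr(Z_{n,p}\ge t)(1+t)^{-1}dt$ via Fubini and then grinds through two changes of variables, Markov's inequality, and dominated convergence, whereas you use a pointwise inequality plus a Cauchy--Schwarz/Chebyshev tail bound, which is shorter and makes the role of the hypothesis ($M_{n,p}/p\to\infty$, with $M_{n,p}=-\log a_{n,p}$) more transparent. There is, however, one genuine gap. The additive constant in $\log(1+z)\le \log 2+(\log z)_+$ survives into your final estimate $E[\log(1+Z_{n,p})]\le \log 2 + Cp^2/M_{n,p}$, and $\log 2$ is \emph{not} $o(p)$ when $p$ is bounded. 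The framework (\ref{ass:asy}) explicitly includes bounded $p$ (the LS case, which is precisely what is needed for part (2) of \autoref{thm:cons_ic} and \autoref{thm:cons_mpic}), and there $o(p)$ means $o(1)$; so as written your bound proves the lemma only along subsequences with $p\to\infty$. The paper's proof is arranged so that every contribution is either $o(1)$ or $p\cdot o(1)$, exactly to cover this case.

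The repair is cheap and stays within your argument: pay the $\log 2$ only where it is needed. Write $\log(1+z)\le \log(1+z)\mathbf{1}\{z<1\}+\{\log 2+\log z\}\mathbf{1}\{z\ge 1\}$. Since $\{Z_{n,p}\ge 1\}=\{pY_{n,p}\ge M_{n,p}\}\subset\{|Y_{n,p}|\ge M_{n,p}/p\}$ once $M_{n,p}>0$, Chebyshev gives $\log 2\cdot\pr(Z_{n,p}\ge 1)\le \log 2\cdot Cp^2/M_{n,p}^2\to 0$, and your Cauchy--Schwarz step bounds $E[(pY_{n,p}-M_{n,p})\mathbf{1}\{Z_{n,p}\ge 1\}]$ by $Cp^2/M_{n,p}=o(p)$ exactly as you computed (inserting $|Y_{n,p}|$ inside the expectation, since the event $\{|Y_{n,p}|>M_{n,p}/p\}$ also contains realizations with $Y_{n,p}<0$). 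For the remaining piece, note that for any fixed $u>0$,
\begin{align*}
  \log(1+Z_{n,p})\mathbf{1}\{Z_{n,p}<1\}
  \le Z_{n,p}\wedge 1
  \le a_{n,p}e^{up}+\mathbf{1}\{Y_{n,p}>u\},
\end{align*}
so $\limsup E[\log(1+Z_{n,p})\mathbf{1}\{Z_{n,p}<1\}]\le C/u^2$ for every $u$, and letting $u\to\infty$ shows this term is $o(1)$. With this modification all three contributions are $o(1)$ or $o(p)$, and the lemma holds in the full framework, including bounded $p$.
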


\begin{proof}
  The left-hand side of (\ref{eq:exp_log}) can be represented as
  \begin{align*}
    E[\log (1 + Z_{n, p})]
    =
    E
    \left[
      \int_0^{Z_{n, p}}
      \frac{1}{1+t}
      dt
    \right]
    =
    \int_0^\infty
      \pr(Z_{n, p} \geq t)
      \frac{1}{1 + t}
    dt
    .
  \end{align*}
  Note that we use Fubini's theorem in the second equality.
  By the change of variables, we have
  \begin{align*}
    \int_0^\infty
      \pr(Z_{n, p} \geq t)
      \frac{1}{1 + t}
    dt
    =
    \int_0^1
      \pr(\exp(pY_{n, p}) \geq s)
      \frac{a_{n, p}}{1 + a_{n, p}s}
    ds
    +
    \int_1^\infty
      \pr(\exp(pY_{n, p}) \geq s)
      \frac{a_{n, p}}{1 + a_{n, p}s}
    ds
    .
  \end{align*}
  The first term can be evaluated as
  \begin{align*}
    0
    \leq
    \int_0^1
      \pr(\exp(pY_{n, p}) \geq s)
      \frac{a_{n, p}}{1 + a_{n, p}s}
    ds
    \leq
    a_{n, p}
    \to
    0
    .
  \end{align*}
  For the second term, by the change of variables again, we have
  \begin{align*}
    \int_1^\infty
      \pr(\exp(pY_{n, p}) \geq s)
      \frac{a_{n, p}}{1 + a_{n, p}s}
    ds
    =
    \int_0^1
      \pr(Y_{n, p} \geq u) \frac{a_{n, p} p \exp(up)}{1 + a_{n, p} \exp(up)}
    du
    +
    \int_1^\infty
      \pr(Y_{n, p} \geq u) \frac{a_{n, p} p \exp(up)}{1 + a_{n, p} \exp(up)}
    du
    .
  \end{align*}
  The first term asymptotically vanishes because
  \begin{align*}
    \int_0^1
      \pr(Y_{n, p} \geq u) \frac{a_{n, p} p \exp(up)}{1 + a_{n, p} \exp(up)}
    du
    \leq
    \int_0^1
      \frac{a_{n, p} p \exp(up)}{1 + a_{n, p} \exp(up)}
    du
    =
    \log \frac{1 + a_{n, p}\exp(up)}{1 + a_{n, p}}
    \to
    0
    .
  \end{align*}
  On the other hand, the second term can be evaluated by using Markov's inequality in the following way:
  \begin{align*}
    \int_1^\infty
      \pr(Y_{n, p} \geq u) \frac{a_{n, p} p \exp(up)}{1 + a_{n, p} \exp(up)}
    du
    \leq
    p
    \left(\sup_{n, p} E [Y_{n, p}^2]\right)
    \int_1^\infty
      \frac{1}{u^2}
      \frac{a_{n, p} \exp(up)}{1 + a_{n, p} \exp(up)}
    du
    .
  \end{align*}
  From the assumption, we have $\sup_{n, p} E Y_{n, p}^2 < \infty$.
  We can apply Lebesgue's convergence theorem to the integral of the right-hand side, and we can see that it goes to zero.
  From these facts, we have
  \begin{align*}
    \int_1^\infty
      \pr(Y_{n, p} \geq u) \frac{a_{n, p} p \exp(up)}{1 + a_{n, p} \exp(up)}
    du
    =
    o(p)
    .
  \end{align*}
  Combining all results above yields (\ref{eq:exp_log}).
\end{proof}

Now, we move on to the proof of Theorem \ref{thm:bias}.
The bias term can be written as
\begin{align} \label{eq:bias}
  b_j
  &=
  E^{\mY, \tilde\mY} \left[-2 \log f(\tilde\mY \mid \mX_j \hat \mTheta_j, \hat \mSigma_j) + 2 \log f(\mY \mid \mX_j \hat \mTheta_j, \hat \mSigma_j)\right]
  \nonumber
  \\
  & \ \ \ \ \ \ +
  E^{\mY, \tilde\mY}\left[
    -2
    \log \left\{
      1
      +
      \frac{(1 - w_j) f_\pi(\tilde \mY \mid \hat\mSigma_j)}{w_j f(\tilde \mY \mid \mX_j \hat\mTheta_j, \hat\mSigma_j)}
    \right\}
    +
    2
    \log \left\{
      1
      +
      \frac{(1 - w_j) f_\pi(\mY \mid \hat\mSigma_j)}{w_j f(\mY \mid \mX_j \hat\mTheta_j, \hat\mSigma_j)}
    \right\}
  \right]
  .
\end{align}
Since the first term is the bias that appears in the derivation of the exact AIC, we have
\begin{align} \label{eq:bias_aicc}
  E^{\mY, \tilde\mY} \left[-2 \log f(\tilde\mY \mid \mX_j \hat \mTheta_j, \hat \mSigma_j) + 2 \log f(\mY \mid \mX_j \hat \mTheta_j, \hat \mSigma_j)\right]
  =
  \frac{np(2k_j + p + 1)}{n - p - k_j - 1}
  .
\end{align}
Then what remains to be shown is that the second term of (\ref{eq:bias}) is $o(p)$.
From Assumption \ref{ass:pi},
\begin{align*}
  \int f(\tilde \mY \mid \mX_j\mTheta_j, \hat \mSigma_j) \pi(\mTheta_j \mid \hat \mSigma_j) d\mTheta_j
  &=
  \int f(\tilde \mY \mid \mX_j \mXi_j \hat \mSigma_j^{1/2}, \hat \mSigma_j) \pi(\mXi_j \mid \mI_p) d\mXi_j
  \\
  &\leq
  \pi_M
  \int f(\tilde \mY \mid \mX_j \mTheta_j, \hat \mSigma_j) d\mTheta_j
  |\hat \mSigma_j|^{-k_j/2}
  \\
  &=
  \pi_M
  \frac{
    \etr(-(1/2) \hat \mSigma_j^{-1} \tilde \mY^\top  (\mI_n - \mP_j) \tilde \mY)
  }
  {(2 \pi)^{(n - k_j)p/2} |\hat \mSigma_j|^{n/2} |\mX_j^\top  \mX_j|^{p/2}}
\end{align*}
holds.
Therefore, we have
\begin{align*}
  \frac{(1 - w_j) f_\pi(\tilde \mY \mid \hat\mSigma_j)}{w_j f(\tilde \mY \mid \hat\mTheta_j, \hat\mSigma_j)}
  &=
  \frac{1 - w_j}{w_j}
  \frac{\int f(\tilde \mY \mid \mX_j\mTheta_j, \hat \mSigma_j) \pi(\mTheta_j \mid \hat \mSigma_j) d\mTheta_j}{f(\tilde \mY \mid \mX_j \hat \mTheta_j, \hat \mSigma_j)}
  \\
  &\leq
  \frac{1 - w_j}{w_j}
  \pi_M
  \left(
    \frac
      {2\pi}
      {\left|\frac{\mX_j^\top \mX_j}{n}\right|^{1/k_j} n}
  \right)^{pk_j/2}
  \etr\left(\frac{1}{2} \hat \mSigma_j^{-1} (\tilde \mTheta_j - \hat \mTheta_j)^\top  \mX_j^\top  \mX_j (\tilde \mTheta_j - \hat \mTheta_j)\right)
\end{align*}
where $\tilde \mTheta_j = (\mX_j^\top \mX_j)^{-1}\mX_j^\top  \tilde \mY.$
Let
\begin{align*}
  a_{n, p}
  =
  \frac{1 - w_j}{w_j}
  \pi_M
  \left(
    \frac
      {2\pi}
      {\left|\frac{\mX_j^\top \mX_j}{n}\right|^{1/k_j} n}
  \right)^{pk_j/2}
  ,
  \ \
  Y_{n, p} = \frac{1}{2p}\tr \left[\hat \mSigma_j^{-1} (\tilde \mTheta_j - \hat \mTheta_j)^\top  \mX_j^\top  \mX_j (\tilde \mTheta_j - \hat \mTheta_j)\right]
  .
\end{align*}
With $\ell_{n, p}$ in Assumption \ref{ass:pi}, we can write $a_{n, p}$ as
\begin{align*}
  a_{n, p}
  =
  \frac{(1 - w_j)\ell_{n, p}^{pk_j/2}}{w_j n^{pk_j/2}} \pi_M
  \left(
    \frac
      {2\pi}
      {\left|\frac{\mX_j^\top \mX_j}{n}\right|^{1/k_j} \ell_{n, p}}
  \right)^{pk_j/2}
  .
\end{align*}
From Assumption \ref{ass:pi}, we can see that $a_{n, p}$ satisfies the conditions of Lemma \ref{lem:e_log}.

Next, we show that $Y_{n, p}$ is tight.
First of all, we have
\begin{align*}
  \mZ_j
  =
  \left(\frac{\mX_j^\top\mX_j}{2}\right)^{1/2} (\tilde \mTheta_j - \hat \mTheta_j) \mSigma_\ast^{-1/2}
  \sim N_{k_j \times p} (\mO_{k_j \times p} , \mI_p \otimes \mI_{k_j})
  .
\end{align*}
Then it holds that
\begin{align*}
  Y_{n, p}
  =
  \frac{n}{p} \tr \left[\left(n\mSigma_\ast^{-1/2}\hat \mSigma_j\mSigma_\ast^{-1/2}\right)^{-1} \mZ_j^\top  \mZ_j\right]
  .
\end{align*}
Notice that
\begin{align*}
  \mW_j
  =
  n\mSigma_\ast^{-1/2}\hat \mSigma_j\mSigma_\ast^{-1/2}
  \sim
  W_p(n - k_j, \mI_p)
  ,
\end{align*}
and that $\mW_j$ is independent of $\mZ_j$.
Accordingly,
\begin{align*}
  Y_{n,p}^2
  =
  \left(\frac{n}{p} \tr (\mW_j^{-1} \mZ_j^\top  \mZ_j) \right)^2
  =
  \frac{n^2}{p^2} \left\{(\vecop(\mZ_j^\top ))^\top  (\mI_{k_j} \otimes \mW_j^{-1}) \vecop(\mZ_j^\top ) \right\}^2
  .
\end{align*}
(For the $\vecop$ operator, see \cite{timm2002applied}, for example.)
Since $\mZ_j$ and $\mW_j$ are independent,
\begin{align*}
  E[Y_{n, p}^2 \mid \mW_j]
  &=
  \frac{n^2}{p^2}
  E\left[\left\{(\vecop(\mZ_j^\top ))^\top  (\mI_{k_j} \otimes \mW_j^{-1}) \vecop(\mZ_j^\top ) \right\}^2 \mid \mW_j\right]
  \\
  &=
  \frac{n^2}{p^2}
  \left\{
  k_j^2(\tr(\mW_j^{-1}))^2
  +
  2k_j\tr(\mW_j^{-2})
  \right\}
\end{align*}
follows directly from Theorem 11.22 of \cite{schott2016matrix}.
Besides, from the result of \cite{watamori1990moments},
\begin{align*}
  E\left[
    \begin{matrix}
      (\tr (\mW_j^{-1}))^2 \\
      \tr (\mW_j^{-2})
    \end{matrix}
  \right]
  &=
  \frac{
    \begin{pmatrix}
      n - k_j - p - 2 & 2 \\
      1 & n - k_j - p - 1
    \end{pmatrix}
    \begin{pmatrix}
      (\tr (\mI_p))^2 \\
      \tr (\mI_p)
    \end{pmatrix}
  }{(n - k_j - p)(n - k_j - p - 1)(n - k_j - p - 3)}
\end{align*}
holds.
Therefore, we have
\begin{align*}
  E[Y_{n, p}^2]
  &=
  \frac{n^2}{p^2} E\left[k_j^2(\tr (\mW_j^{-1}))^2 + 2k_j \tr(\mW_j^{-2})\right]
  \\
  &=
  \frac{n^2 k_j \{(n - k_j - p - 2)k_jp + 2(n - 1)\}}{p(n - k_j - p)(n - k_j - p - 1)(n - k_j - p - 3)}
  ,
\end{align*}
which converges, and so it is bounded.
Now, we can apply Lemma \ref{lem:e_log} and obtain
\begin{align} \label{eq:tilde}
  0
  \leq
  E^{\mY, \tilde\mY}\left[
    \log \left\{
      1
      +
      \frac{(1 - w_j) f_\pi(\tilde \mY \mid \hat\mSigma_j)}{w_j f(\tilde \mY \mid \mX_j \hat\mTheta_j, \hat\mSigma_j)}
    \right\}
  \right]
  \leq
  E^{\mY, \tilde \mY} \left[\log (1 + a_{n, p} \exp(p Y_{n, p}))\right]
  =
  o(p)
  .
\end{align}
In a similar way, we have
\begin{align} \label{eq:not_tilde}
  E^{\mY}\left[
    \log \left\{
      1
      +
      \frac{(1 - w_j) f_\pi(\mY \mid \hat\mSigma_j)}{w_j f(\mY \mid \mX_j \hat\mTheta_j, \hat\mSigma_j)}
    \right\}
  \right]
  =
  o(p)
  .
\end{align}
Combining (\ref{eq:bias}), (\ref{eq:bias_aicc}), (\ref{eq:tilde}), and (\ref{eq:not_tilde}) yields
\begin{align*}
  b_j
  =
  \frac{np(2k_j + p + 1)}{n - p - k_j - 1}
  +
  o_(p)
  .
\end{align*}

\subsection{Proof of Lemma \ref{lem:bf}} \label{subsec:lem:bf}
In a similar way to \ref{subsec:thm:bias}, it is easy to get
\begin{align*}
  0
  \leq
  \frac{(1 - w_j) f_\pi(\mY \mid \hat\mSigma_j)}{w_j f(\mY \mid \hat\mTheta_j, \hat\mSigma_j)}
  \leq
  \frac{(1 - w_j)\ell_{n, p}^{pk_j/2}}{w_j n^{pk_j/2}} \pi_M
  \left(
    \frac
      {2\pi}
      {\left|\frac{\mX_j^\top \mX_j}{n}\right|^{1/k_j} \ell_{n, p}}
  \right)^{pk_j/2}
  ,
\end{align*}
and the right-hand side is $o_p(1)$ from the assumptions given in the statement.
From this and the continuity of $\log,$ the statement holds.

\subsection{Proof of Theorem \ref{thm:cons_ic}} \label{subsec:thm:ic_cons_ls}
In \cite{yanagihara2015consistency}, we can find a proof of the statement when $p$ is unbounded, so we show the consistency property under the condition (\ref{cond:ls}) of Theorem \ref{thm:cons_ic}.

The probability of selecting the true model can be evaluated as
\begin{align*}
  \pr(\hat j_\ast = j_\ast)
  =
  \pr(\forall j \in \cJ\setminus\{j_\ast\},~ IC_m(j) > IC_m(j_\ast))
  \geq
  1 - \sum_{j \in \cJ \setminus \{j_\ast\}} \pr(IC_m(j) \leq IC_m(j_\ast))
  ,
\end{align*}
so it is sufficient to show that
\begin{align} \label{eq:consist_each}
  \limlh \pr(IC_m(j) > IC_m(j_\ast))
  =
  1
\end{align}
holds for all $j \in \cJ \setminus \{j_\ast\}$.

At first, let $j \in \cJ_+ \setminus \{j_\ast\}.$
Notice that $\hat \mSigma_\ast$ admits the following decomposition:
\begin{align*}
  n \hat \mSigma_\ast
  =
  n \hat \mSigma_j
  +
  \mY^\top  (\mP_j - \mP_\ast) \mY
  .
\end{align*}
It is easy to get
\begin{gather*}
  n \hat \mSigma_j
  \sim
  W_p(n - k_j, \mSigma_\ast)
  ,
  \ \
  \mY^\top  (\mP_j - \mP_\ast) \mY
  \sim
  W_p(k_j - k_\ast, \mSigma_\ast)
  ,
\end{gather*}
and these are independent.
By Corollary 1 of \cite{yanagihara2017high},
\begin{align*}
  \frac{n}{p} \log \frac{|\hat \mSigma_j|}{|\hat \mSigma_\ast|}
  &=
  - \frac{n}{p} \log \frac{|n \mSigma_\ast^{-1/2}\hat \mSigma_j\mSigma_\ast^{-1/2} + \mSigma_\ast^{-1/2}\mY^\top  (\mP_j - \mP_\ast) \mY\mSigma_\ast^{-1/2}|}{|n\mSigma_\ast^{-1/2}\hat \mSigma_j\mSigma_\ast^{-1/2}|}
  \\
  &=
  -
  \frac{Z}{p}
  +
  (k_j - k_\ast) \left(1 + \frac{\log(1 - c_{n, p})}{c_{n, p}}\right) + o_p(1)
\end{align*}
where $Z$ is a random variable distributed according to the $\chi^2$ distribution with $(k_j - k_\ast)p$ degrees of freedom.
Since $p$ is bounded, $c_0 = 0$ holds, and we have
\begin{align} \label{eq:odr_ratio_+}
  \frac{n}{p} \log \frac{|\hat \mSigma_j|}{|\hat \mSigma_\ast|}
  =
  O_p(1)
  .
\end{align}
Since we assume (LS-2),
\begin{align*}
  \frac{1}{p(m(j) - m(j_\ast))} (IC_m(j) - IC_m(j_\ast))
  =
  \frac{1}{m(j) - m(j_\ast)} \times \frac{n}{p} \log \frac{|\hat \mSigma_j|}{|\hat \mSigma_\ast|}
  +
  \frac{1}{p}
  =
  \frac{1}{p}
  +
  o_p(1)
  ,
\end{align*}
which implies (\ref{eq:consist_each}) for $j \in \cJ \setminus \{j_\ast\}.$

Next, let $j \in \cJ_-,$ and $j_+$ denotes $j \cup j_\ast$.
By the same procedure as \cite{yanagihara2015consistency}, we have the following decomposition:
\begin{align} \label{eq:decomp}
  \log \frac{|\hat \mSigma_j|}{|\hat \mSigma_{j_+}|}
  =
  -
  \log\frac{|\mU_1|}{|\mU_1 + \mU_2|}
  -
  \log\frac{|\mU_3|}{|\mU_3 + \mU_4|}
\end{align}
where
\begin{gather*}
  \mU_1 \sim W_{\gamma_j}(n - k_j - p, \mI_{\gamma_j})
  ,
  \ \
  \mU_2 \sim W_{\gamma_j}(p, \mI_{\gamma_j}; \mDelta_j^2)
  ,
  \\
  \mU_3 \sim W_{k_{j_+} - k_j - \gamma_j}(n - k_j - \gamma_j - p, \mI_{k_{j_+} - k_j - \gamma_j})
  ,
  \ \
  \mU_4 \sim W_{k_{j_+} - k_j - \gamma_j}(p, \mI_{k_{j_+} - k_j - \gamma_j})
  .
\end{gather*}
For the first term,
\begin{align*}
  - \log\frac{|\mU_1|}{|\mU_1 + \mU_2|}
  &=
  \log |\mDelta_j^{-1} (\mU_1 + \mU_2) \mDelta_j^{-1}|
  -
  \log |\mDelta_j^{-1} \mU_1 \mDelta_j^{-1}|
\end{align*}
holds.
From the definition of the noncentral Wishart matrix, we obtain the following decomposition:
\begin{align*}
  \mU_2
  =
  (\mZ + \mGamma_j)^\top  (\mZ + \mGamma_j)
\end{align*}
where $\mZ \sim N_{p \times \gamma_j}(\mO_{p \times \gamma_j}, \mI_{\gamma_j} \otimes \mI_p)$.
Notice that
\begin{align*}
  E\left[\tr (\mDelta_j^{-1} \mZ^\top \mZ \mDelta_j^{-1})\right]
  =
  p \tr (\mDelta_j^{-2})
  ,
  \ \
  E\left[||\mDelta_j^{-1} \mGamma_j^\top  \mZ \mDelta_j^{-1}||^2\right]
  =
  \gamma_j \tr (\mDelta_j^{-2})
\end{align*}
where $||\cdot||^2$ denotes the Frobenius norm.
From (\ref{ass:ncm}) of Assumption \ref{ass:cons_ass}, it follows that $\tr (\mDelta_j^{-2}) \leq \frac{\gamma_j}{\lambda_j},$ and that it converges to zero.
Hence, we have
\begin{align*}
  \mDelta_j^{-1} \mU_2 \mDelta_j^{-1}
  =
  \mDelta_j^{-1} \mZ^\top \mZ \mDelta_j^{-1} + \mDelta_j^{-1} \mGamma_j^\top  \mZ \mDelta_j^{-1} + \mDelta_j^{-1} \mZ^\top  \mGamma_j \mDelta_j^{-1} + \mDelta_j^{-1} \mGamma_j^\top  \mGamma_j \mDelta_j^{-1}
  \overset{p}{\to}
  \mI_{\gamma_j}
  ,
\end{align*}
Therefore, it follows that
\begin{align*}
  - \log\frac{|\mU_1|}{|\mU_1 + \mU_2|}
  =
  \log |\mI_{\gamma_j} + \mDelta_j^{-1} \mU_1 \mDelta_j^{-1}|
  -
  \log |\mDelta_j^{-1} \mU_1 \mDelta_j^{-1}| + o_p(1)
  .
\end{align*}
By the Minkowski inequality (see, e.g., \cite{schott2016matrix}),
\begin{align*}
  \log |\mI_{\gamma_j} + \mDelta_j^{-1} \mU_1 \mDelta_j^{-1}|
  \geq
  \log \left(1 + |\mDelta_j^{-1} \mU_1 \mDelta_j^{-1}|\right)
\end{align*}
holds.
Here notice the following inequality:
\begin{align*}
  \log(1 + x) - \log x
  =
  \int_0^{\frac{1}{x}} \frac{1}{1 + y} dy
  \geq
  \frac{1}{1 + x}
\end{align*}
for $x > 0$.
These imply
\begin{align*}
  - \log\frac{|\mU_1|}{|\mU_1 + \mU_2|}
  &\geq
  \log \left(1 + |\mDelta_j^{-1} \mU_1 \mDelta_j^{-1}|\right)
  -
  \log |\mDelta_j^{-1} \mU_1 \mDelta_j^{-1}| + o_p(1)
  \\
  &\geq
  \frac{1}{1 + |\mDelta_j^{-1} \mU_1 \mDelta_j^{-1}|} + o_p(1)
  .
\end{align*}
It follows that
\begin{align*}
  |\mDelta_j^{-1} \mU_1 \mDelta_j^{-1}|
  =
  \frac{|\mU_1 / n|}{|\mDelta_j^2 / n|}
  \leq
  \frac{|\mU_1 / n|}{(\lambda_j / n)^{\gamma_j}}
  =
  \left(\frac{\lambda_j}{n}\right)^{-\gamma_j} + o_p(1)
\end{align*}
since
\begin{align*}
  \frac{\mU_1}{n}
  \overset{p}{\to}
  \mI_{\gamma_j}
\end{align*}
holds.
Therefore, we have
\begin{align*}
  - \log\frac{|\mU_1|}{|\mU_1 + \mU_2|}
  \geq
  \frac{1}{1 + (\lambda_j / n)^{-\gamma_j}} + o_p(1)
  .
\end{align*}
From (\ref{ass:ncm}) of Assumption \ref{ass:cons_ass}, the denominator of the first term of the right-hand side is bounded above, so the left-hand side is positive in probability; that is, for some $\varepsilon > 0,$
\begin{align} \label{eq:odr_1_2}
  - \log\frac{|\mU_1|}{|\mU_1 + \mU_2|}
  >
  \varepsilon + o_p(1)
\end{align}
holds.
By the boundedness of $p,$ it follows that
\begin{align*}
  \frac{\mU_3}{n}
  \overset{p}{\to}
  \mI_{k_{j_+} - k_j - \gamma_j}
  ,
  \ \
  \frac{\mU_4}{n}
  \overset{p}{\to}
  \mO_{(k_{j_+} - k_j - \gamma_j) \times (k_{j_+} - k_j - \gamma_j)}
  .
\end{align*}
Hence, we have
\begin{align} \label{eq:odr_3_4}
  - \log\frac{|\mU_3|}{|\mU_3 + \mU_4|}
  =
  - \log\frac{|\mU_3 / n|}{|\mU_3 / n + \mU_4 / n|}
  =
  o_p(1)
  .
\end{align}
Furthermore, from (\ref{eq:odr_ratio_+}), we have
\begin{align} \label{eq:odr_ratio_-}
  \log \frac{|\hat \mSigma_{j_+}|}{|\hat \mSigma_\ast|}
  =
  \frac{p}{n} \times \frac{n}{p} \log \frac{|\hat \mSigma_{j_+}|}{|\hat \mSigma_\ast|}
  =
  o_p(1)
  .
\end{align}
From (\ref{eq:decomp}), (\ref{eq:odr_1_2}), (\ref{eq:odr_3_4}), and (\ref{eq:odr_ratio_-}), it follows that
\begin{align*}
  \log \frac{|\hat \mSigma_{j}|}{|\hat \mSigma_\ast|}
  >
  \varepsilon + o_p(1)
  .
\end{align*}
Therefore, we have,
\begin{align*}
  \frac{1}{n} (IC_m(j) - IC_m(j_\ast))
  >
  \varepsilon + o_p(1)
\end{align*}
from (LS-1), and this implies (\ref{eq:consist_each}) for $j \in \cJ_-.$

\end{document}